\newtheorem{theorem}{Theorem}[section]
\newtheorem{lemma}{Lemma}[section]
\newtheorem{definition}{Definition}[section]
\newtheorem{remark}{Remark}[section]
\newcommand{\beq}{\begin{equation}}
\newcommand{\eeq}{\end{equation}}
\newcommand{\beqn}{\begin{eqnarray}}
\newcommand{\eeqn}{\end{eqnarray}}
\newcommand{\DF}[2]{{\displaystyle\frac{#1}{#2}}}
\def\a{{\alpha}}
\def\e{{\varepsilon}}
\begin{document}

\title{Almost periodic functions on the quantum time scale and applications\thanks{This work is supported by the National Natural Sciences Foundation of People's Republic of China under Grant 11361072.}}\author {Yongkun Li\\
Department of Mathematics,
Yunnan University\\
Kunming, Yunnan 650091\\
 People's Republic of China}
\date{}
\maketitle{}
\begin{abstract}
In this paper, we first propose two types of concepts of almost periodic functions on the quantum time scale.
Secondly, we study some  basic properties of almost periodic functions on the quantum time scale. Thirdly, based on these, we obtain a result about the existence and uniqueness of almost
periodic solutions of  dynamic equations on the quantum time scale by Lyapunov method. Then, we give a equivalent definition of almost periodic functions on the quantum time scale. Finally, as an application, we propose a class of  high-order Hopfield neural networks on  the  quantum time scale and establish the existence of almost periodic solutions of this class of neural networks.
\end{abstract}
\textbf{Keywords:}  Almost periodic functions,  Almost periodic solutions, Quantum  time scale.
\section{Introduction}

\setcounter{equation}{0}
 \indent \allowdisplaybreaks

 The concept of almost periodicity was  initiated by the
   Bohr  during the period 1923-1925  \cite{b1,b2}.  Bohr's
theory quickly attracted the attention of very famous mathematicians of
that time. And since then the questions of the theory of almost periodic functions and almost periodic solutions
of differential equations have been   very interesting and challenge problems of great importance.
 The interaction between these two theories has enriched
both. On one hand, it is well known that in Celestial Mechanics,
almost periodic solutions and stable solutions are intimately
related. In the same way, stable electronic circuits exhibit
almost periodic behavior.  On the other hand,
certain problems in differential equations have
led to new definitions and results in the theory of almost periodic functions.

In recent years, the theory of  quantum calculus has
received much attention, due to its tremendous applications in several fields of physics, such as cosmic strings and black holes, conformal quantum mechanics, nuclear and high energy physics, fractional quantum Hall effect,  high-$T_c$ superconductors, and so on.  For interested reader, we refer to, for example, [3-6] and the references cited therein.

On one hand, recently, Bohner and Chieochan \cite{q5} introduced in the literature the  concept of periodicity for functions defined on the quantum time scale owing to the fact that   taking into account  the periodicity of  $q$-difference equations is important in order to better understand several physics phenomena.
However, in reality, almost periodic phenomenon is  more common and complicate than periodic one. Therefore, investigating the almost periodicity of  dynamic equations on the quantum time scale
  is more interesting and more challenge.

On the other hand, in order to study the almost periodicity on time scales, a concept of
almost periodic time scales was proposed in \cite{bing5}. Based on this concept, a series of concepts of almost periodic function classes, such as
almost periodic
functions \cite{bing5}, pseudo almost periodic functions \cite{bbbli3},
almost automorphic functions \cite{bbbr1},   weighted pseudo
almost automorphic functions \cite{bbbli2},  almost periodic set-valued functions \cite{bbbset}, almost periodic functions in the sense
of Stepanov on time scales \cite{liwangpan} and so on were defined successively.  Although  the concept of
almost periodic time scales in \cite{bing5} can unify the
continuous and discrete situations effectively, it is very restrictive because it requires the time scale with certain global additivity. This
excludes some interesting time scales. For instance, the quantum time scale,  which has no such global additivity.

Motivated by the above discussion, our main purpose of this paper is  to propose two types of definitions of almost periodic functions on the quantum time scale, to study some  basic properties of almost periodic functions on the quantum time scale, to give an equivalent definition of  almost periodic functions on the quantum time scale and to explore some of their applications to periodic dynamic equations on the quantum time scale.

The organization of this paper is as follows: In Section 2, we
introduce some notations and definitions of time scale calculus. In Section 3, we propose the
concepts of  almost periodic functions on the quantum time
scale and investigate some of their basic properties.  In Section 4,
we study the existence and uniqueness of almost periodic solutions of  dynamic equations on the quantum time
scale by Lyapunov method.   In Section 5, we give an equivalent definition of almost periodic functions on the quantum time
scale. In Section 6, as an application of our results, we first propose a class of  high-order Hopfield neural networks on  the  quantum time scale, then by  the exponential dichotomy of linear dynamic equations on time scales and the Banach fixed point theorem, we establish the existence of almost periodic solutions of this class of neural networks. In Section 7,
 we draw a conclusion.

\section{Preliminaries}
\setcounter{equation}{0}
 \indent

In this section, we shall recall some basic definitions of time scale calculus.

A time scale $\mathbb{T}$ is an arbitrary nonempty closed subset of the real numbers, the forward and backward jump operators $\sigma$, $\rho:\mathbb{T}\rightarrow \mathbb{T}$ and the forward graininess $\mu:\mathbb{T}\rightarrow \mathbb{R}^{+}$ are defined, respectively, by
\[
\sigma(t):=\inf \{s\in\mathbb{T}:s> t\},\,\,\rho(t):=\sup\{s\in\mathbb{T}:s<t\}\,\,
\text{and}\,\,\mu(t)=\sigma(t)-t.
\]

A point $t$ is said to be left-dense if $t>\inf\mathbb{T}$ and $\rho(t)=t$, right-dense if $t<\sup\mathbb{T}$ and $\sigma(t)=t$, left-scattered if $\rho(t)<t$ and right-scattered if $\sigma(t)>t$. If $\mathbb{T}$ has a left-scattered maximum $m$, then $\mathbb{T}^{k}=\mathbb{T}\backslash m$, otherwise $\mathbb{T}^{k}=\mathbb{T}$. If $\mathbb{T}$ has a right-scattered minimum $m$, then $\mathbb{T}_{k}=\mathbb{T}\backslash m$, otherwise $\mathbb{T}^{k}=\mathbb{T}$.

\begin{definition} \cite{bing5}\label{li67}
A time scale is called an almost periodic time scale if
\[
\Pi:=\{\tau\in\mathbb{R}:t+\tau\in\mathbb{T},\forall
t\in\mathbb{T}\}\neq\{0\}.
\]
\end{definition}

A function $f : \mathbb{T}\rightarrow \mathbb{R}$ is right-dense continuous or rd-continuous provided it is continuous at right-dense points in $\mathbb{T}$ and its left-sided limits exist (finite) at left-dense points in $\mathbb{T}$. If $f$ is continuous at each right-dense point and each left-dense point, then $f$ is said to be a continuous function on $\mathbb{T}$.

For $f:\mathbb{T}\rightarrow\mathbb{R}$ and $t\in{\mathbb{T}^{k}}$, then $f$ is called delta differentiable at $t\in{\mathbb{T}}$ if there exists $c\in\mathbb{R}$ such that for given any $\varepsilon\geq{0}$, there is an open neighborhood $U$ of  $t$ satisfying
\[
\left|[f(\sigma(t))-f(s)]-c[\sigma(t)-s]\right|\leq\varepsilon\left|\sigma(t)-s\right|
\]
for all $s\in U$. In this case, $c$ is called the delta derivative of $f$ at $t\in{\mathbb{T}}$, and is denoted by $c=f^{\Delta}(t)$. For $\mathbb{T}=\mathbb{R}$, we have $f^{\Delta}=f^{'}$, the usual derivative, for $\mathbb{T}=\mathbb{Z}$ we have the backward difference operator, $f^{\Delta}(t)=\Delta f(t):=f(t+1)-f(t)$£¬ and for $\mathbb{T}=\overline{q^{\mathbb{Z}}} (q>1)$, the quantum time scale, we have  the $q$-derivative
\[
f^\Delta(t):=D_qf(t)=\displaystyle\left\{\begin{array}{lll}\frac{f(qt)-f(t)}{(q-1)t},&t\neq 0,\\
\lim\limits_{t\rightarrow 0}\frac{fqt)-f(t)}{(q-1)t},&t=0.
\end{array}\right.
\]
\begin{remark}Note that
\[
D_qf(0)=\frac{df(0)}{dt}
\]
if $f$ is continuously differentiable.
\end{remark}
Define for $V\in
C_{rd}[\overline{q^\mathbb{Z}}\times\mathbb{R}^n,\mathbb{R}]$, $D^+_qV^\Delta(t,x(t))$ to
mean that, given $\varepsilon>0$, there exists a right neighborhood
$N_\e\subset N$ such that
\[\frac{1}{\mu(t,s)}[V(\sigma(t),x(\sigma(t)))-V(s,x(\sigma(t))-\mu(t,s)f(t,x(t)))]<D^+_qV^\Delta(t,x(t))+\e\]
for each $s\in N_\e$, $s>t$, where $\mu(t,s)\equiv\sigma(t)-s$. If
$t$ is right-scattered and $V(t,x(t))$ is continuous at $t$, this
reduces to
\[D^+_qV^\Delta(t,x(t))=\frac{V(\sigma(t),x(\sigma(t)))-V(t,x(t))}{\sigma(t)-t}.\]
\begin{lemma}\cite{t1}\label{lmas}
Let $y,f\in C_{rd}$ and $p\in\mathcal{R}^+$, then
\[D_qy (t)\leq p(t)y(t)+f(t)\,\,\,\,\,{\rm for\,\,all}\,\,\,\,t\in \overline{q^\mathbb{Z}}\]
implies
\[y(t)\leq y(t_0)e_p(t,t_0)+\int^t_{t_0}e_p(t,\sigma(\tau))f(\tau)d_q\tau\,\,\,\,{\rm for\,\,all}\,\,\,\,t\in \overline{q^\mathbb{Z}},\]
where $t_0\in \overline{q^\mathbb{Z}}$.
\end{lemma}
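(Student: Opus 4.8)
The plan is to run the standard time--scale integrating--factor argument, written in the $q$-notation $D_q={}^\Delta$ on $\overline{q^\mathbb{Z}}$. Recall that for $p\in\mathcal{R}^+$ one has $\ominus p\in\mathcal{R}^+$ as well, so the exponential $g:=e_{\ominus p}(\cdot,t_0)$ is well defined and strictly positive, satisfies $D_q g=(\ominus p)g$, and obeys the shift identity $g^{\sigma}(t)=g(t)/(1+\mu(t)p(t))=e_{\ominus p}(\sigma(t),t_0)$. The transformed variable $w(t):=y(t)\,e_{\ominus p}(t,t_0)$ is the object whose $q$-derivative I can control directly from the hypothesis.

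First I would rewrite the assumption as $D_qy(t)-p(t)y(t)\le f(t)$ and apply the product rule $D_q(uv)=(D_qu)\,v^{\sigma}+u\,D_qv$ to $w=y\,g$. Substituting $D_qg=(\ominus p)g$ and the shift identity for $g^{\sigma}$ gives
\[
D_qw(t)=\frac{e_{\ominus p}(t,t_0)}{1+\mu(t)p(t)}\bigl[D_qy(t)-p(t)y(t)\bigr]=e_{\ominus p}(\sigma(t),t_0)\bigl[D_qy(t)-p(t)y(t)\bigr].
\]
Because $e_{\ominus p}(\sigma(t),t_0)>0$, multiplying the differential inequality by this factor preserves its direction, so $D_qw(t)\le e_{\ominus p}(\sigma(t),t_0)f(t)$.

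Next I would $q$-integrate this inequality from $t_0$ to $t$ (for $t\ge t_0$), using the fundamental theorem of calculus on $\overline{q^\mathbb{Z}}$ and the monotonicity of the delta integral, together with $w(t_0)=y(t_0)e_{\ominus p}(t_0,t_0)=y(t_0)$, to obtain
\[
y(t)\,e_{\ominus p}(t,t_0)\le y(t_0)+\int_{t_0}^{t}e_{\ominus p}(\sigma(\tau),t_0)f(\tau)\,d_q\tau.
\]
Multiplying through by $e_p(t,t_0)=1/e_{\ominus p}(t,t_0)>0$ and invoking the identities $e_{\ominus p}(\sigma(\tau),t_0)=e_p(t_0,\sigma(\tau))$ and the semigroup law $e_p(t,t_0)e_p(t_0,\sigma(\tau))=e_p(t,\sigma(\tau))$ collapses the right--hand side to exactly $y(t_0)e_p(t,t_0)+\int_{t_0}^{t}e_p(t,\sigma(\tau))f(\tau)\,d_q\tau$, which is the claimed bound.

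The algebra of the exponential identities is routine; the step that uses the hypotheses most delicately is the positivity of $e_{\ominus p}(\sigma(t),t_0)$, which is precisely where $p\in\mathcal{R}^+$ enters and guarantees that multiplying by the integrating factor does not reverse the inequality. I expect the only real bookkeeping obstacle to be tracking the $\sigma$-shifts through the product rule and confirming that monotone $q$-integration is legitimate on the discrete scale $\overline{q^\mathbb{Z}}$; note that the argument yields the forward version $t\ge t_0$, the regime in which the stated estimate is intended to be applied.
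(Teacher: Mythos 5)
The paper offers no proof of this lemma: it is quoted verbatim from the cited reference \cite{t1} (it is the time-scales Gronwall inequality, Theorem 6.1 of Bohner--Peterson, specialized to $\mathbb{T}=\overline{q^{\mathbb{Z}}}$), so there is no in-paper argument to compare against. Your integrating-factor proof is correct and is essentially the standard one from that source: the key identities $D_q e_{\ominus p}(\cdot,t_0)=(\ominus p)e_{\ominus p}(\cdot,t_0)$, $e_{\ominus p}(\sigma(t),t_0)=e_{\ominus p}(t,t_0)/(1+\mu(t)p(t))>0$ for $p\in\mathcal{R}^+$, and the semigroup law all check out, and you are right to flag that the integration step genuinely delivers the estimate only for $t\ge t_0$, which is the regime in which the lemma is used.
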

For more details about the theory of time scale calculus and the theory of quantum calculus, the reader may want to consult \cite{q1, t1,t2}.

\section{Almost periodic functions }
\setcounter{equation}{0}\indent

From now on, we use $\mathbb{X}$ to denote the complex numbers $\mathbb{C}$ or the real numbers $\mathbb{R}$, use $D$ to denote an open set in $\mathbb{X}$ or
$D=\mathbb{X}$, and use $S$ to denote an arbitrary compact subset of $D$.

\begin{definition}\label{def34}
 A function  $f\in
C(\overline{q^{\mathbb{Z}}}\times D,\mathbb{X})$ is  called an almost
periodic function in $t\in \overline{q^{\mathbb{Z}}}$ uniformly in $x\in D$ if   for any given $\varepsilon>0$
and each compact subset $S$ of $D$, there exists
$\ell(\varepsilon,S)>0$ such that each interval
$[q^p, q^pq^\ell]\cap{\overline{q^{\mathbb{Z}}}}$ contains   a $q^\tau(p,\varepsilon,S)\in [q^p, q^pq^\ell]\cap{\overline{q^{\mathbb{Z}}}} (p\in \mathbb{Z})$ such that
\begin{equation*}
|f(t q^\tau ,x)-f(t,x)|<\varepsilon, \quad\forall (t,x)\in
\overline{q^{\mathbb{Z}}}\times S.
\end{equation*}
This   $q^\tau$ is called the $\varepsilon$-translation number of $f$ and $\ell(\varepsilon,S)$ is called  the
inclusion length of the set $E\{\varepsilon,f,S\}=\{q^\tau\in \overline{q^{\mathbb{Z}}}:|f(t q^\tau ,x)-f(t,x)|<\varepsilon,\,\,
\forall (t,x)\in   \overline{q^{\mathbb{Z}}}\times S\}.$
\end{definition}

\begin{definition}\label{def34b}
 A function  $f\in
C(\overline{q^{\mathbb{Z}}}\times D,\mathbb{X})$ is  called an almost
periodic function in $t\in \overline{q^{\mathbb{Z}}}$ uniformly in $x\in D$ if  for any given $\varepsilon>0$
and each compact subset $S$ of $D$, there exists
$\ell(\varepsilon,S)>0$ such that each interval
$[q^p, q^pq^\ell]\cap{\overline{q^{\mathbb{Z}}}}$ contains  a $q^\tau(p,\varepsilon,S)\in [q^p, q^pq^\ell]\cap{\overline{q^{\mathbb{Z}}}} (p\in \mathbb{Z})$ such that
\begin{equation*}
|q^\tau f(t q^\tau,x)-f(t,x)|<\varepsilon, \quad\forall (t,x)\in
(\overline{q^{\mathbb{Z}}}\setminus\{0\})\times S.
\end{equation*}
This   $q^\tau$ is called the $\varepsilon$-translation number of $f$ and $\ell(\varepsilon,S)$ is called  the
inclusion length of the set $E\{\varepsilon,f,S\}=\{q^\tau\in \overline{q^{\mathbb{Z}}}:|q^\tau f(t q^\tau ,x)-f(t,x)|<\varepsilon,\,\,
\forall (t,x)\in   (\overline{q^{\mathbb{Z}}}\setminus\{0\})\times S\}.$
\end{definition}

\begin{definition}\label{def34y}
 A function  $f\in
C(\overline{q^{\mathbb{Z}}},\mathbb{X})$ is  called an almost
periodic function  if   for any given $\varepsilon>0$, there exists
$\ell(\varepsilon)>0$ such that each interval
$[q^p, q^pq^\ell]\cap{\overline{q^{\mathbb{Z}}}}$ contains  a $q^\tau(p,\varepsilon)\in [q^p, q^pq^\ell]\cap{\overline{q^{\mathbb{Z}}}} (p\in \mathbb{Z})$ such that
\begin{equation*}
|f(t q^\tau)-f(t)|<\varepsilon, \quad\forall t\in
\overline{q^{\mathbb{Z}}}.
\end{equation*}
This   $q^\tau$ is called the $\varepsilon$-translation number of $f$ and $\ell(\varepsilon)$ is called  the
inclusion length of the set $E\{\varepsilon,f\}=\{q^\tau\in \overline{q^{\mathbb{Z}}}:|f(t q^\tau)-f(t)|<\varepsilon,\,\,
\forall t\in   \overline{q^{\mathbb{Z}}}\}.$
\end{definition}

\begin{definition}\label{def34by}
 A function  $f\in
C(\overline{q^{\mathbb{Z}}},\mathbb{X})$ is  called an almost
periodic function  if   for any given $\varepsilon>0$, there exists
$\ell(\varepsilon)>0$ such that each interval
$[q^p, q^pq^\ell]\cap{\overline{q^{\mathbb{Z}}}}$ contains  a $q^\tau(p,\varepsilon)\in [q^p, q^pq^\ell]\cap{\overline{q^{\mathbb{Z}}}} (p\in \mathbb{Z})$ such that
\begin{equation*}
|q^\tau f(t q^\tau)-f(t)|<\varepsilon, \quad\forall t\in
\overline{q^{\mathbb{Z}}}\setminus\{0\}.
\end{equation*}
This  $q^\tau$ is called the $\varepsilon$-translation number of $f$ and $\ell(\varepsilon)$ is called  the
inclusion length of the set $E\{\varepsilon,f\}=\{q^\tau\in \overline{q^{\mathbb{Z}}}:|q^\tau f(t q^\tau)-f(t)|<\varepsilon,\,\,
\forall  t\in   \overline{q^{\mathbb{Z}}}\}.$
\end{definition}

\begin{definition}\label{zht1}
 A function $f\in C(\overline{q^{\mathbb{Z}}},
\mathbb{E}^n)$ is called an asymptotically almost periodic function
 if
\[f(t)=p(t)+q(t),\]
where $p$ is an almost periodic function on $\overline{q^\mathbb{Z}}$, and
$q(t)\rightarrow 0$ as $t\rightarrow\infty$.
\end{definition}

For convenience, we introduce some notations:

 Let  $BC(\overline{q^{\mathbb{Z}}}\times D,\mathbb{X})=\{ f\in
C(\overline{q^{\mathbb{Z}}}\times D,\mathbb{X})$ is  bounded on  $\overline{q^{\mathbb{Z}}}\times D$$\}$, $AP(\overline{q^{\mathbb{Z}}}\times D,\mathbb{X})=\{ f\in
C(\overline{q^{\mathbb{Z}}}\times D,\mathbb{X})$ is   almost
periodic   in $t\in \overline{q^{\mathbb{Z}}}$ uniformly in $x\in D$$\}$ and $AP(\overline{q^{\mathbb{Z}}},\mathbb{X})=\{ f\in
C(\overline{q^{\mathbb{Z}}},\mathbb{X})$ is   almost
periodic$\}$.

 Let
$\alpha=\{\alpha_{n}\}\subset \mathbb{Z}$ and $\beta=\{\beta_{n}\}\subset \mathbb{Z}$ be two sequences of integer numbers. We denote
$\alpha+\beta:=\{\alpha_{n}+\beta_{n}\},
-\alpha:=\{-\alpha_{n}\}$ and use $\beta\subset\alpha$ to denote that $\beta$ is a subsequence of
$\alpha$.
We say $\alpha$ and $\beta$ are common
subsequences of $\alpha^{'}$ and $\beta^{'}$, respectively, if there is some given function $n(\nu)$ such that
 $\alpha_{n}=\alpha^{'}_{n(\nu)}$ and $\beta_{n}=\beta^{'}_{n(\nu)}$.

We  introduce two translation operators $T$ and $\hat{T}$,
$T_{\alpha}f(t,x)=g(t,x)$ and $\hat{T} f(t,x)=g(t,x)$ mean that
$g(t,x)=\lim\limits_{n\rightarrow+\infty}f(q^{\alpha_{n}}t,x)$ and $g(t,x)=\lim\limits_{n\rightarrow+\infty}q^{\alpha_{n}}f(q^{\alpha_{n}}t,x) (t\neq 0)$, respectively, and are
written only when the limits exist. The mode of convergence  will be specified at each use of the
symbols.

\begin{remark}
When study the properties of almost periodic functions defined by  Definitions \ref{def34} and \ref{def34y}, we use the translation operator $T$.
When study the properties of almost periodic functions defined by  Definitions \ref{def34b} and \ref{def34by}, we use the translation operator $\bar{T}$.
\end{remark}

\begin{remark}
All the results of this section hold for almost periodic functions defined by Definitions \ref{def34}, \ref{def34b}, \ref{def34y} and \ref{def34by}. Since their proofs are similar, we only prove them under Definitions \ref{def34} and \ref{def34y}.
\end{remark}

\begin{remark}
 A function $f:\overline{q^{\mathbb{Z}}}\times D\rightarrow\mathbb{X}$ is continuous if and only if $\lim\limits_{n\rightarrow-\infty}f(q^n,x)=f(0,x)$ exists uniformly in $x\in S$.
\end{remark}

\begin{theorem}\label{thm33}
Let   $f\in AP(\overline{q^{\mathbb{Z}}}\times D,\mathbb{X})$,  then it is uniformly continuous and
bounded on $\overline{q^{\mathbb{Z}}}\times S$.
\end{theorem}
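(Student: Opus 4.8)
The plan is to establish boundedness and uniform continuity separately, exploiting the structural features of $\overline{q^{\mathbb{Z}}}$: writing $t=q^m$, multiplication by a translation number $q^\tau$ is just the shift $m\mapsto m+\tau$ of exponents, $0$ is the unique accumulation point, and away from $0$ the points are uniformly separated since consecutive gaps $q^{n}(q-1)$ are bounded below on $\{t\ge 1\}$. The mechanism behind both parts is that the inclusion-length property of Definition \ref{def34} lets me push any $t=q^m$ into the fixed finite set $R:=[1,q^\ell]\cap\overline{q^{\mathbb{Z}}}=\{q^j:0\le j\le\ell\}$: choosing the index $p=-m$, the resulting translation number $q^\tau$ has exponent $\tau\in[-m,-m+\ell]$, whence $tq^\tau=q^{m+\tau}\in R$.

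For boundedness I would take $\varepsilon=1$ with inclusion length $\ell=\ell(1,S)$. For any $t=q^m\neq 0$ choose a $1$-translation number $q^\tau$ with $tq^\tau\in R$; the inequality $|f(tq^\tau,x)-f(t,x)|<1$ then gives $|f(t,x)|\le\sup_{(u,v)\in R\times S}|f(u,v)|+1$ for every $x\in S$. As $R$ is finite and $S$ compact, $R\times S$ is compact and this supremum is finite, while the value at $t=0$ is bounded by continuity of $f(0,\cdot)$ on $S$. Hence $f$ is bounded on $\overline{q^{\mathbb{Z}}}\times S$.

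For uniform continuity I would split $\overline{q^{\mathbb{Z}}}=R_0\sqcup L$ with $R_0=\{0\}\cup\{q^{-k}:k\ge 0\}$ compact and $L=\{q^n:n\ge 1\}$, noting that any two distinct points not both lying in $R_0$ are at distance at least $q-1$. Setting $\delta_1=(q-1)/2$, the condition $|t-s|<\delta_1$ forces either $t,s\in R_0$ or $t=s$. Since $f$ is continuous on the compact set $R_0\times S$ it is uniformly continuous there, which yields a modulus $\delta_A$ disposing of the first case. The case $t=s$ reduces to equicontinuity in $x$ \emph{uniformly in} $t$: for each $\varepsilon>0$ one needs $\delta_B>0$ with $|f(t,x)-f(t,y)|<\varepsilon$ for all $t\in\overline{q^{\mathbb{Z}}}$ whenever $|x-y|<\delta_B$.

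This last step is the main obstacle, because $t$ ranges over the noncompact set $\overline{q^{\mathbb{Z}}}$, so plain continuity does not suffice and almost periodicity must be invoked. I would fix $\ell=\ell(\varepsilon/3,S)$, use uniform continuity of $f$ on the compact set $R\times S$ to obtain $\delta_B>0$ with $|f(u,x)-f(u,y)|<\varepsilon/3$ for all $u\in R$ when $|x-y|<\delta_B$, and then, for arbitrary $t=q^m\neq 0$, translate into $R$ by an $\varepsilon/3$-translation number $q^\tau$ and write
\[
|f(t,x)-f(t,y)|\le|f(t,x)-f(tq^\tau,x)|+|f(tq^\tau,x)-f(tq^\tau,y)|+|f(tq^\tau,y)-f(t,y)|;
\]
the outer terms are each below $\varepsilon/3$ by the translation property and the middle term is below $\varepsilon/3$ since $tq^\tau\in R$, while $t=0$ is covered by continuity on $S$. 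Taking $\delta=\min\{\delta_1,\delta_A,\delta_B\}$ then gives the joint uniform continuity of $f$ on $\overline{q^{\mathbb{Z}}}\times S$.
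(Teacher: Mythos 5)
Your proof is correct, and your boundedness argument is essentially the paper's own: both of you use the inclusion-length property with $p=-m$ to push an arbitrary $t=q^m$ into the fixed compact window $[1,q^\ell]\cap\overline{q^{\mathbb{Z}}}$ and then transfer the bound back via a $1$-translation number. Where you genuinely diverge is in the uniform continuity part. The paper splits $\overline{q^{\mathbb{Z}}}$ at an $\varepsilon$-dependent threshold $q^{-N_0}$, controls the piece near $0$ by continuity at $(0,x)$ and a triangle inequality, and then simply asserts uniform continuity on $[q^{-N_0},+\infty)\cap\overline{q^{\mathbb{Z}}}$ because those points are isolated with graininess bounded below; this settles only the $t$-direction and leaves unaddressed the equicontinuity in $x$ \emph{uniformly} over the infinitely many isolated values of $t$, which is exactly the point you isolate as ``the main obstacle.'' Your fixed decomposition into the compact set $R_0=\{0\}\cup\{q^{-k}:k\ge 0\}$ and the uniformly separated set $L$, together with the $\varepsilon/3$ three-term estimate that translates $t$ into the compact window $R$ before comparing $f(t,x)$ with $f(t,y)$, supplies precisely the missing step: it is the only place in the whole theorem where almost periodicity (rather than mere continuity) is needed for uniform continuity. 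So your route is slightly longer but more complete than the paper's, and it buys a genuinely justified \emph{joint} uniform continuity on $\overline{q^{\mathbb{Z}}}\times S$; the paper's version is shorter but implicitly assumes the $x$-equicontinuity it never proves.
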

\begin{proof}
For a given $\varepsilon_0\leq 1$ and some compact subset $S\subset D$,
there exists  an $\ell(\varepsilon,S)$ such that in each interval
 $[q^p,q^pq^\ell]\cap\overline{q^{\mathbb{Z}}} (p\in \mathbb{Z})$, there exists a $q^\tau\in
E\{\varepsilon_0,f,S\}$ such that
\begin{equation*}
|f(q^\tau t,x)-f(t,x)|<\varepsilon_0\leq1, \quad\forall
(t,x)\in \overline{q^{\mathbb{Z}}}\times S.
\end{equation*}
It follows from $f\in C(\overline{q^{\mathbb{Z}}}\times D,\mathbb{X})$ that for any $(t,x)\in
([1,q^\ell]\cap\overline{q^{\mathbb{Z}}})\times S$, there exists an
 $M>0$ such that $|f(t,x)|<M$. For any given $t\in\overline{q^{\mathbb{Z}}}$ and $t\neq 0$,
 take $q^\tau\in
E(\varepsilon,f,S)\cap[t^{-1},t^{-1}q^\ell]$, then
$tq^\tau\in[1,q^\ell]\cap{\overline{q^{\mathbb{Z}}}}$. Hence, for $x\in
S$, we get
\begin{equation*}
|f(tq^\tau,x)|<M
\quad
\mathrm{and}
\quad
|f(tq^\tau,x)-f(t,x)|<1.
\end{equation*}
Therefore, we obtain
\begin{equation*}
|f(t,x)|<\max\big\{\max\limits_{x\in S}|f(0,x)|,M\big\}+1,\,\, \forall (t,x)\in \overline{q^{\mathbb{Z}}}\times S.
\end{equation*}

Furthermore, because of the continuity of  $f$  at $(0,x), x\in S$,
for any given $\varepsilon>0$, there exists a positive integer $N_0(\varepsilon)$ such that $|f(q^n,x)-f(0,x)|<\frac{\varepsilon}{2}$ for all $n<-N_0$ and $x\in S$.
Thus,  for any  $(q^{n_1},x),(q^{n_2},x)\in ([0, q^{-N_0}]\cap \overline{q^{\mathbb{Z}}})\times S$, we find
 \begin{equation}
 |f(q^{n_1},x)-f(q^{n_2},x)|<|f(q^{n_1},x)-f(0,x)|+|f(q^{n_2},x)-f(0,x)|<\varepsilon,
 \end{equation}
 therefore, $f(t,x)$ is uniformly continuous on $([0, q^{-N_0}]\cap \overline{q^{\mathbb{Z}}})\times S$.
 Since all the points in the interval $[q^{-N_0}, +\infty)\cap \overline{q^{\mathbb{Z}}}$ are isolated points and $\mu (t)\geq (q-1)q^{-N_0}$ for all $t\in [q^{-N_0}, +\infty)\cap \overline{q^{\mathbb{Z}}}$, so $f(t,x)$ is uniformly continuous on $([q^{-N_0}, +\infty)\cap \overline{q^{\mathbb{Z}}})\times S$. The proof is completed.
\end{proof}

\begin{theorem}\label{thm34}
Let $f\in AP(\overline{q^{\mathbb{Z}}}\times D,\mathbb{X})$, then for any given sequence
$\alpha^{'}\subset\mathbb{Z},$ there exist a subsequence
$\beta\subset\alpha^{'}$ and $g\in C(\overline{q^{\mathbb{Z}}}\times
D,\mathbb{X})$ such that $T_{\beta}f=g$ holds
uniformly on $\overline{q^{\mathbb{Z}}}\times S$ and $g \in AP(\overline{q^{\mathbb{Z}}}\times D,\mathbb{X})$.
\end{theorem}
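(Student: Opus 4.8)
The plan is to establish a quantum analogue of Bochner's normality theorem: from the almost periodicity of $f$ I would extract a subsequence of multiplicative translates converging uniformly on $\overline{q^{\mathbb{Z}}}\times S$, and then verify that the limit inherits almost periodicity. Writing $f_n(t,x):=f(q^{\alpha'_n}t,x)$, I would first record, via Theorem~\ref{thm33}, that $f$ is bounded and uniformly continuous on $\overline{q^{\mathbb{Z}}}\times S$; since each map $t\mapsto q^{\alpha'_n}t$ is a bijection of $\overline{q^{\mathbb{Z}}}$ onto itself, the family $\{f_n\}$ is uniformly bounded and equicontinuous in $x\in S$ with a modulus independent of $n$.

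Next I would produce convergence on the (countable) time scale. Enumerate $\overline{q^{\mathbb{Z}}}=\{q^m:m\in\mathbb{Z}\}\cup\{0\}$ as $\{t_1,t_2,\dots\}$. For each fixed $t_j$, the functions $x\mapsto f(q^{\alpha'_n}t_j,x)$ form a bounded, equicontinuous family on the compact set $S$, so the Arzel\`a--Ascoli theorem furnishes a subsequence converging uniformly in $x\in S$. Diagonalizing over $j$ yields a single subsequence $\beta\subset\alpha'$ and a function $g$ on $\overline{q^{\mathbb{Z}}}\times S$ with $f(q^{\beta_n}t_j,x)\to g(t_j,x)$ uniformly in $x\in S$ for every $j$; since the $t_j$ exhaust $\overline{q^{\mathbb{Z}}}$, this gives, at every fixed $t$, convergence uniform in $x$.

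The main obstacle is to upgrade this to convergence uniform in $t$ as well, and it is precisely here that almost periodicity is indispensable. I would argue by contradiction: if $\{f(q^{\beta_n}\cdot,\cdot)\}$ were not uniformly Cauchy, there would exist $\varepsilon_0>0$, indices $n_k,m_k\to\infty$, and points $(s_k,x_k)$ with $|f(q^{\beta_{n_k}}s_k,x_k)-f(q^{\beta_{m_k}}s_k,x_k)|\ge\varepsilon_0$; the case $s_k=0$ is excluded since there the difference vanishes, so $s_k=q^{w_k}$ with $w_k\in\mathbb{Z}$. Taking the inclusion length $\ell=\ell(\varepsilon_0/4,S)$ of Definition~\ref{def34} and applying it to the interval $[q^{-w_k},q^{-w_k}q^\ell]\cap\overline{q^{\mathbb{Z}}}$ produces a translation number $q^{\tau_k}\in E\{\varepsilon_0/4,f,S\}$ with $\tilde s_k:=q^{\tau_k}s_k\in[1,q^\ell]\cap\overline{q^{\mathbb{Z}}}$, a finite set; because $q^{\tau_k}$ acts uniformly in its argument, replacing $s_k$ by $\tilde s_k$ costs at most $\varepsilon_0/4$ in each term, whence
\[
|f(q^{\beta_{n_k}}\tilde s_k,x_k)-f(q^{\beta_{m_k}}\tilde s_k,x_k)|\ge\frac{\varepsilon_0}{2}.
\]
By pigeonhole some value $\tilde s_*$ of the finite window is attained infinitely often, and along that sub-subsequence the left side is bounded by $\sup_{x\in S}|f(q^{\beta_{n_k}}\tilde s_*,x)-g(\tilde s_*,x)|+\sup_{x\in S}|f(q^{\beta_{m_k}}\tilde s_*,x)-g(\tilde s_*,x)|\to0$, contradicting the bound $\varepsilon_0/2$. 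Hence $f(q^{\beta_n}\cdot,\cdot)\to g$ uniformly on $\overline{q^{\mathbb{Z}}}\times S$; being a uniform limit of continuous functions, $g\in C(\overline{q^{\mathbb{Z}}}\times D,\mathbb{X})$ and $T_\beta f=g$.

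Finally, to see $g\in AP(\overline{q^{\mathbb{Z}}}\times D,\mathbb{X})$ I would transfer the translation numbers of $f$ to $g$. Given $\varepsilon>0$, pick any $q^\tau\in E\{\varepsilon/3,f,S\}$ from an arbitrary interval $[q^p,q^pq^\ell]\cap\overline{q^{\mathbb{Z}}}$; then for every $n$ and every $(t,x)\in\overline{q^{\mathbb{Z}}}\times S$ one has $|f(q^{\beta_n}(q^\tau t),x)-f(q^{\beta_n}t,x)|<\varepsilon/3$, and letting $n\to\infty$ (using uniform convergence of $T_\beta f$ at both $q^\tau t$ and $t$) gives $|g(q^\tau t,x)-g(t,x)|\le\varepsilon/3<\varepsilon$. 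Thus every such $q^\tau$ lies in $E\{\varepsilon,g,S\}$, so $g$ admits the inclusion length $\ell(\varepsilon/3,S)$ and is almost periodic. The only delicate points are the equicontinuity underlying the diagonal step and the finite-window reduction in the contradiction argument, both of which rest on Theorem~\ref{thm33} and the inclusion-length property of Definition~\ref{def34}.
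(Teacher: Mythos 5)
Your proof is correct, but it reaches the uniformly convergent subsequence by a genuinely different route from the paper. The paper argues arithmetically: it writes each $\alpha'_n=\tau'_n+\gamma'_n$ with $q^{\tau'_n}\in E\{\varepsilon/2,f,S\}$ and $0\le\gamma'_n\le\ell(\varepsilon/2,S)$, pigeonholes so that $\gamma_n$ is constant, observes that then $q^{\alpha_p-\alpha_m}=q^{\tau_p-\tau_m}\in E\{\varepsilon,f,S\}$, hence $\sup_{(t,x)}|f(tq^{\alpha_p},x)-f(tq^{\alpha_m},x)|<\varepsilon$, and finally runs a nested diagonal over $\varepsilon=1/k$ to get a subsequence that is uniformly Cauchy on $\overline{q^{\mathbb{Z}}}\times S$ in one stroke. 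You instead separate the two ingredients: compactness (Arzel\`a--Ascoli plus a diagonal over the countable time scale) gives convergence at each fixed $t$ uniformly in $x$, and almost periodicity is used only afterwards, in the contradiction argument that pulls a putative bad point $s_k=q^{w_k}$ back into the finite window $[1,q^\ell]\cap\overline{q^{\mathbb{Z}}}$ at a cost of $\varepsilon_0/2$ and then pigeonholes on that finite set. Your finite-window reduction is sound (the identity $q^{\beta_{n_k}}\tilde s_k=q^{\tau_k}(q^{\beta_{n_k}}s_k)$ is exactly what lets the translation number act, and the window really is finite on the quantum time scale), and it makes visible precisely where almost periodicity is indispensable; the paper's version is more self-contained in that it never invokes Arzel\`a--Ascoli or the boundedness/equicontinuity of Theorem~\ref{thm33} for the extraction step. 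Your continuity argument for $g$ (uniform limit of continuous functions) is also simpler than the paper's contradiction argument. The closing step, transferring $E\{\varepsilon/3,f,S\}$ into $E\{\varepsilon,g,S\}$ with the same inclusion length, coincides with the paper's. One shared looseness, not specific to you: both arguments fix a single compact $S$, whereas producing one $\beta$ that works for every compact subset of $D$ simultaneously would require a further exhaustion of $D$ by compacts and one more diagonalization.
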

\begin{proof}
 For any given $\varepsilon>0$ and sequence
$\alpha^{'}=\{\alpha_{n}^{'}\}\subset \mathbb{Z},$ we denote
$\alpha_{n}^{'}=\tau_{n}^{'}+\gamma_{n}^{'},$ where $q^{\tau_{n}^{'}}\in
E\{\DF{\varepsilon}{2},f,S\}, \gamma_{n}^{'}\in \mathbb{Z}$ and $0\leq\gamma_{n}^{'}\leq
\ell(\frac{\varepsilon}{2},S),n=1,2,\ldots.$\big. Hence, it is easy to see that
there exists a subsequence
$\gamma=\{\gamma_{n}\}\subset\gamma^{'}=\{\gamma_{n}^{'}\}$ such
that $\gamma_{n}=s$ for $n\in \mathbb{N}$, $0\leq s\leq
\ell.$
We can take
$\alpha\subset\alpha^{'},\,\tau\subset\tau^{'}$
such that $\alpha,\,\tau$ common with $\gamma$  and $\alpha_n=\tau_n+s$ for all $n\in \mathbb{N}$, then
\begin{equation*}
|f(tq^{\tau_{p}-\tau_{m}},x)-f(t,x)|\leq|f(tq^{\tau_{p}-\tau_{m}},x)-f(tq^{\tau_{p}},x)|+|f(tq^{\tau_{p}},x)-f(t,x)|<\varepsilon,
\end{equation*}
which implies that
\begin{equation*}
q^{\alpha_{p}-\alpha_{m}}=q^{\tau_{p}-\tau_{m}}\in
E\{\varepsilon,f,S\}.
\end{equation*}
Hence, we can get
{\setlength\arraycolsep{2pt}\begin{eqnarray*}
|f(tq^{\alpha_{p}},x)-f(tq^{\alpha_{m}},x)|&\leq&\sup\limits_{(t,x)\in\overline{q^{\mathbb{Z}}}\times S}|f(tq^{\alpha_{p}},x)-f(tq^{\alpha_{m}},x)|\\
&\leq&\sup\limits_{(t,x)\in\overline{q^{\mathbb{Z}}}\times S}|f(tq^{\alpha_{p}-\alpha_{m}},x)-f(t,x)|<\varepsilon.
\end{eqnarray*}}
Thus, we can take subsequences $\alpha^{(k)}=\{\alpha_{n}^{(k)}\},$
$k=1,2,\ldots,$ and $\alpha^{(k+1)}\subset\alpha^{(k)}\subset\alpha$
such that for any integers $m,p,$ and all $(t,x)\in\overline{q^{\mathbb{Z}}}\times
S$,
\begin{equation*}
|f(tq^{\alpha_{p}^{(k)}},x)-f(tq^{\alpha_{m}^{(k)}},x)|<\frac{1}{k},\,k=1,2,\ldots.
\end{equation*}
For all sequences $\alpha^{(k)},k=1,2,\ldots$, we can take a
sequence $\beta=\{\beta_{n}\},\,\beta_{n}=\alpha_{n}^{(n)},$ then it
is easy to see that for any integers $p,m$ with $p<m$ and all
$(t,x)\in\overline{q^{\mathbb{Z}}}\times S$,
\begin{equation*}
|f(tq^{\beta_{p}},x)-f(tq^{\beta_{m}},x)|<\frac{1}{p}.
\end{equation*}
Therefore, $\{f(tq^{\beta_{n}},x)\}$ converges uniformly on
$\overline{q^{\mathbb{Z}}}\times S$, that is,  $T_{\beta}f=g$ holds
uniformly on $\overline{q^{\mathbb{Z}}}\times S$.

Next, we  show that $g\in C(\overline{q^{\mathbb{Z}}}\times D,\mathbb{X})$. Otherwise, there exists a point
$(t_0,x_0)\in\overline{q^{\mathbb{Z}}}\times D$ such that $g$ is not
continuous at this point. Then there  exist $\varepsilon_{0}>0$
and sequences $\{\delta_{m}\}\subset \mathbb{R}^+,\,\{t_{m}\}\subset \overline{q^\mathbb{Z}},\,\{x_{m} \}\subset D,$ where
$\delta_{m}>0,$   $\delta_{m}\rightarrow0$ as
$m\rightarrow+\infty$, $|t_{0}-t_{m}|+|x_{0}-x_{m}|<\delta_{m}$
and
\begin{equation}\label{e31}
|g(t_{0},x_{0})-g(t_{m},x_{m})|\geq\varepsilon_{0}.
\end{equation}
Let $\mathcal{X}=\{x_{m}\}\bigcup\{x_0\},$ obviously, $\mathcal{X}$ is a compact subset
of $D$. Since $T_{\beta}f=g$ holds
uniformly on $\overline{q^{\mathbb{Z}}}\times S$,  there exists a positive integer
$N=N(\varepsilon_{0},\mathcal{X})$ such that for $n>N$,
\begin{equation}\label{e32}
|f(t_{m}q^{\beta_{n}},x_{m})-g(t_{m},x_{m})|<\frac{\varepsilon_0}{3}\quad\forall m\in\mathbb{Z}^{+}
\end{equation}
and
\begin{equation}\label{e33}
|f(t_{0}q^{\beta_{n}},x_{0})-g(t_{0},x_{0})|<\frac{\varepsilon_0}{3}.
\end{equation}
According to the uniform continuity of $f$ on $\overline{q^{\mathbb{Z}}}\times
\mathcal{X}$, for sufficiently large $m$, we have
\begin{equation}\label{e34}
|f(t_{0}q^{\beta_{n}},x_{0})-f(t_{m}q^{\beta_{n}},x_{m})|<\frac{\varepsilon_0}{3}.
\end{equation}
From \eqref{e32}-\eqref{e34}, we get
\begin{equation*}
|g(t_{0},x_{0})-g(t_{m},x_{m})|<\varepsilon_{0},
\end{equation*}
which   contradicts \eqref{e31}. Therefore, $g$ is
continuous on $\overline{q^{\mathbb{Z}}}\times D$.

Finally, for any compact subset $S\subset D$ and given $\varepsilon>0$,
we take $q^\tau\in E\{\varepsilon,f,S\},$ then
\begin{equation*}
|f(tq^{\beta_{n}+\tau},x)-f(tq^{\beta_{n}},x)|<\varepsilon,\,\, \forall (t,x)\in\overline{q^{\mathbb{Z}}}\times S.
\end{equation*}
Let $n\rightarrow+\infty,$   we
have
\begin{equation*}
|g(tq^\tau,x)-g(t,x)|\leq\varepsilon,\,\, \forall (t,x)\in\overline{q^{\mathbb{Z}}}\times S,
\end{equation*}
which implies that
$g \in AP(\overline{q^{\mathbb{Z}}}\times D,\mathbb{X})$. This
completes the proof.
\end{proof}

\begin{theorem}\label{thm35}
Let $f\in BC(\overline{q^{\mathbb{Z}}}\times D,\mathbb{X}),$ if for any
sequence $\alpha^{'}\subset \mathbb{Z}$, there exists
a subsequence $\alpha\subset\alpha^{'}$ such that $T_{\alpha}f$ exists
uniformly on $\overline{q^{\mathbb{Z}}}\times S$, then $f \in AP(\overline{q^{\mathbb{Z}}}\times D,\mathbb{X})$.
\end{theorem}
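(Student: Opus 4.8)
The plan is to argue by contradiction and reduce the claim to the classical Bochner-type separation argument, exploiting the fact that on $\overline{q^{\mathbb{Z}}}$ the translation $t\mapsto q^{\tau}t$ is governed entirely by the integer exponent $\tau$, so that the multiplicative picture is really the additive theory in disguise. First I would suppose $f\notin AP(\overline{q^{\mathbb{Z}}}\times D,\mathbb{X})$. Negating Definition \ref{def34} yields an $\varepsilon_{0}>0$ and a compact $S\subset D$ for which no inclusion length exists: for every $\ell>0$ there is an integer $p$ such that the exponent window $[p,p+\ell]$ contains no $\tau$ with $\sup_{(t,x)\in\overline{q^{\mathbb{Z}}}\times S}|f(q^{\tau}t,x)-f(t,x)|<\varepsilon_{0}$, i.e. $q^{\tau}\notin E\{\varepsilon_{0},f,S\}$ throughout that window.

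Next I would construct inductively a sequence of integers $\{\beta_{n}\}$ whose pairwise differences are all ``bad'', meaning $q^{\beta_{i}-\beta_{j}}\notin E\{\varepsilon_{0},f,S\}$ for $i\neq j$. Start with $\beta_{1}=0$. Having chosen $\beta_{1},\dots,\beta_{n}$, set $\ell_{n}=2\max_{1\le j\le n}|\beta_{j}|+1$ and use the failure of the inclusion length for this $\ell_{n}$ to produce an interval $[p_{n},p_{n}+\ell_{n}]$ free of $\varepsilon_{0}$-translation exponents; then take $\beta_{n+1}$ to be an integer nearest its midpoint $p_{n}+\ell_{n}/2$. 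The choice of $\ell_{n}$ forces $\beta_{n+1}-\beta_{j}\in[p_{n},p_{n}+\ell_{n}]$ for every $j\le n$, so each such difference is not an $\varepsilon_{0}$-translation exponent, whence $\sup_{(t,x)}|f(q^{\beta_{n+1}-\beta_{j}}t,x)-f(t,x)|\ge\varepsilon_{0}$.

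Then I would convert these differences into a statement about the translates themselves. Since $t\mapsto q^{\beta_{j}}t$ is a bijection of $\overline{q^{\mathbb{Z}}}$ onto itself (it permutes the points $q^{n}$ and fixes $0$), the change of variable $s=q^{\beta_{j}}t$ gives, for all $i\neq j$,
\[
\sup_{(t,x)\in\overline{q^{\mathbb{Z}}}\times S}|f(q^{\beta_{i}}t,x)-f(q^{\beta_{j}}t,x)|
=\sup_{(s,x)\in\overline{q^{\mathbb{Z}}}\times S}|f(q^{\beta_{i}-\beta_{j}}s,x)-f(s,x)|\ge\varepsilon_{0}.
\]
Thus any two members of the function sequence $\{f(q^{\beta_{n}}\cdot,\cdot)\}$ are at sup-distance at least $\varepsilon_{0}$ on $\overline{q^{\mathbb{Z}}}\times S$, so this sequence is not uniformly Cauchy and admits no uniformly convergent subsequence. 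Applying the hypothesis to $\alpha'=\{\beta_{n}\}$ produces a subsequence $\alpha\subset\alpha'$ along which $T_{\alpha}f$ converges uniformly on $\overline{q^{\mathbb{Z}}}\times S$ --- a contradiction. Hence $f\in AP(\overline{q^{\mathbb{Z}}}\times D,\mathbb{X})$.

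The main obstacle is the inductive construction: I must phrase the negation of almost periodicity carefully and choose the window length $\ell_{n}$ large enough, relative to all previously selected $\beta_{j}$, so that every difference $\beta_{n+1}-\beta_{j}$ is forced into the translation-number-free interval. Getting this bookkeeping right, together with checking that the chosen midpoint lands on a legitimate integer exponent and that all of $\beta_{1},\dots,\beta_{n},\beta_{n+1}$ remain mutually separated, is the delicate part. Everything afterward --- the multiplicative change of variables and the failure of the Cauchy property --- is routine once the $\varepsilon_{0}$-separated sequence of translates is in hand.
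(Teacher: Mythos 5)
Your proposal is correct and follows essentially the same route as the paper: negate the inclusion-length condition, inductively build a sequence of exponents whose pairwise differences all fall inside translation-number-free windows (the paper uses windows of length exceeding $2(|\alpha_1'|+\cdots+|\alpha_n'|)$ with even endpoints so the midpoint is an integer, versus your $2\max_j|\beta_j|+1$ with a nearest-integer midpoint, but the bookkeeping works either way), and then use the change of variables $s=q^{\beta_j}t$ to conclude the translates are $\varepsilon_0$-separated and hence have no uniformly convergent subsequence. No substantive differences.
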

\begin{proof}
Proof by contradiction. If $f \not\in AP(\overline{q^{\mathbb{Z}}}\times D,\mathbb{X})$, then there exist
$\varepsilon_{0}>0$ and $S\subset D$ such that for no matter how
large $\ell>0$, we can always find an interval  $[q^p, q^pq^\ell]\cap \overline{q^{\mathbb{Z}}}$ such that
$[q^p, q^pq^\ell]\cap E\{\varepsilon_{0},f,S\}=\emptyset.$

To this end, we take a number $\alpha_{1}^{'}\in \mathbb{Z}$ and find an interval
$[q^{a_{1}},q^{b_{1}}]\cap \overline{q^{\mathbb{Z}}}$ with $b_{1}-a_{1}>2|\alpha_{1}^{'}|$ and $a_1,b_1\in 2\mathbb{Z}$ such that
$[q^{a_1}, q^{b_1}]\cap E\{\varepsilon_{0},f,S\}=\emptyset.$
 Next, we take $\alpha_{2}^{'}=\frac{1}{2}(a_1+b_1),$
it is easy to see that $q^{\alpha_{2}^{'}-\alpha_{1}^{'}}\in[q^{a_1},q^{b_1}]\cap{\overline{q^{\mathbb{Z}}}}$, and so
$q^{\alpha_{2}^{'}-\alpha_{1}^{'}}\not\in E\{\varepsilon_{0},f,S\};$
then we find an interval $[q^{a_{2}},q^{b_{2}}]\cap \overline{q^{\mathbb{Z}}}$ with
$b_{2}-a_{2}>2(|\alpha_{1}^{'}|+|\alpha_{2}^{'}|)$ and $a_2,b_2\in 2\mathbb{Z}$ such that $[q^{a_2}, q^{b_2}]\cap E\{\varepsilon_{0},f,S\}=\emptyset.$
 Next, we take $\alpha_{3}^{'}=\frac{1}{2}(a_2+b_2),$
obviously,
$q^{\alpha_{3}^{'}-\alpha_{2}^{'}},\,q^{\alpha_{3}^{'}-\alpha_{1}^{'}}\not\in
E\{\varepsilon_{0},f,S\}.$ We can repeat these process again and
again, and can find $\alpha_{4}^{'},\,\alpha_{5}^{'},\,\ldots,$ such
that $q^{\alpha_{i}^{'}-\alpha_{j}^{'}}\not\in
E\{\varepsilon_{0},f,S\},\,i>j.$ Thus, for any $i\neq
j,\,i,j=1,2,\ldots,$ without loss of generality, let $i>j,$   we find
\begin{eqnarray*}
\sup\limits_{(t,x)\in\overline{q^{\mathbb{Z}}}\times
S}|f(tq^{\alpha_{i}^{'}},x)-f(tq^{\alpha_{j}^{'}},x)|&=&\sup\limits_{(t,x)\in\overline{q^{\mathbb{Z}}}\times
S}|f(tq^{\alpha_{i}^{'}-\alpha_{j}^{'}},x)-f(t,x)|\geq\varepsilon_{0},
\end{eqnarray*}
which implies that there is no uniformly convergent subsequence of
$\{f(tq^{\alpha_{n}^{'}},x)\}$ for $(t,x)\in \overline{q^{\mathbb{Z}}}\times S$. This
is a contradiction. Hence, $f \in AP(\overline{q^{\mathbb{Z}}}\times D,\mathbb{X})$. This completes the proof.
\end{proof}

\begin{definition}
\label{def35}
A function $f \in C(\overline{q^{\mathbb{Z}}}\times D,\mathbb{X})$ is called an almost
periodic function in $t$ uniformly in $x\in D$, if for any
given sequence $\alpha^{'}\subset \mathbb{Z},$ there exists a subsequence
$\alpha\subset\alpha^{'}$ such that $T_{\alpha}f$ exists
uniformly on $\overline{q^{\mathbb{Z}}}\times S.$
\end{definition}
\begin{definition}
\label{def35a}
A function $f \in C(\overline{q^{\mathbb{Z}}}\times D,\mathbb{X})$ is called an almost
periodic function in $t$ uniformly in $x\in D$, if for any
given sequence $\alpha^{'}\subset \mathbb{Z},$ there exists a subsequence
$\alpha\subset\alpha^{'}$ such that $\hat{T}_{\alpha}f$ exists
uniformly on $\overline{q^{\mathbb{Z}}}\times S.$
\end{definition}

According to Theorems \ref{thm34} and   \ref{thm35}, one can easily see that  Definitions \ref{def35} and \ref{def35a} are equivalent to Definitions \ref{def34} and \ref{def34b}, respectively.

\begin{theorem}\label{thm36}
If $f \in AP(\overline{q^{\mathbb{Z}}}\times D,\mathbb{X})$  and $\varphi\in AP(\overline{q^{\mathbb{Z}}},\mathbb{X})$ with $\varphi(t)\subset S$ for all $t\in\overline{q^{\mathbb{Z}}}$. Then
$f(\cdot,\varphi(\cdot))\in AP(\overline{q^{\mathbb{Z}}},\mathbb{X})$.
\end{theorem}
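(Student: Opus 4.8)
The plan is to work with the sequential characterization of almost periodicity (Definition \ref{def35}), which by the remark following Theorem \ref{thm35} applies equally to functions of $t\in\overline{q^{\mathbb{Z}}}$ alone. Thus, given an arbitrary sequence $\alpha'\subset\mathbb{Z}$, it suffices to extract a subsequence $\alpha\subset\alpha'$ along which $f(q^{\alpha_n}t,\varphi(q^{\alpha_n}t))$ converges uniformly in $t\in\overline{q^{\mathbb{Z}}}$; together with the continuity of the composition and its boundedness (the latter coming from Theorem \ref{thm33}, since $\varphi(t)\in S$), this yields $f(\cdot,\varphi(\cdot))\in AP(\overline{q^{\mathbb{Z}}},\mathbb{X})$.

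First I would apply Theorem \ref{thm34} to $\varphi$: from $\alpha'$ extract $\alpha''\subset\alpha'$ and $\psi\in AP(\overline{q^{\mathbb{Z}}},\mathbb{X})$ with $T_{\alpha''}\varphi=\psi$ uniformly on $\overline{q^{\mathbb{Z}}}$. Since $S$ is compact (hence closed) and $\varphi(t)\in S$ for all $t$, the limit satisfies $\psi(t)\in S$ for every $t$. Next I would apply Theorem \ref{thm34} to $f$ along the sequence $\alpha''$: extract a further subsequence $\alpha\subset\alpha''$ and $g\in AP(\overline{q^{\mathbb{Z}}}\times D,\mathbb{X})$ with $T_\alpha f=g$ uniformly on $\overline{q^{\mathbb{Z}}}\times S$. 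The candidate limit of the composition is then $g(t,\psi(t))$, which is well defined because $\psi(t)\in S\subset D$.

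The heart of the argument is the estimate
\begin{align*}
|f(q^{\alpha_n}t,\varphi(q^{\alpha_n}t))-g(t,\psi(t))| &\le |f(q^{\alpha_n}t,\varphi(q^{\alpha_n}t))-g(t,\varphi(q^{\alpha_n}t))| \\
&\quad + |g(t,\varphi(q^{\alpha_n}t))-g(t,\psi(t))|.
\end{align*}
For the first term I would invoke the uniformity in $x$ of $T_\alpha f=g$ on $\overline{q^{\mathbb{Z}}}\times S$: since $\varphi(q^{\alpha_n}t)\in S$, this term is $<\frac{\varepsilon}{2}$ for all large $n$, uniformly in $t$. For the second term I would use that $g$, being almost periodic, is uniformly continuous on $\overline{q^{\mathbb{Z}}}\times S$ by Theorem \ref{thm33}, combined with $\varphi(q^{\alpha_n}t)\to\psi(t)$ uniformly; uniform continuity then forces this term $<\frac{\varepsilon}{2}$ for all large $n$, uniformly in $t$. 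Adding the two bounds gives uniform convergence of $f(q^{\alpha_n}\cdot,\varphi(q^{\alpha_n}\cdot))$ to $g(\cdot,\psi(\cdot))$, which is exactly what the sequential criterion requires.

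The step I expect to be the main obstacle is controlling the first term: it relies essentially on the convergence $T_\alpha f=g$ being uniform in the space variable $x$ over the whole of $S$, so that one may legitimately substitute the moving argument $x=\varphi(q^{\alpha_n}t)$; mere pointwise convergence in $x$ would not suffice. The nested extraction — first thinning $\alpha'$ for $\varphi$, then thinning again for $f$ — is precisely what guarantees that a single subsequence $\alpha$ works simultaneously for both factors, and the compactness of $S$ is what keeps all moving arguments inside the region where the uniform estimates of Theorems \ref{thm33} and \ref{thm34} are valid.
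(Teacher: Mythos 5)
Your proposal is correct and follows essentially the same route as the paper's own proof: a nested subsequence extraction giving $T_\alpha\varphi=\psi$ and $T_\alpha f=g$ uniformly, followed by the same triangle-inequality split and the uniform continuity of $g$ on $\overline{q^{\mathbb{Z}}}\times S$ to control the second term. Your added remarks that $\psi(t)\in S$ by closedness of $S$ and that the two extractions must be nested are points the paper glosses over, but they do not change the argument.
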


\begin{proof}It is easy to see that
for any  sequence $\alpha^{'}\subset\mathbb{Z},$ there exist a subsequence
$\alpha\subset\alpha^{'}$, functions $\psi$ and $g$ such that
$T_{\alpha}\varphi=\psi$ exists uniformly on $\overline{q^{\mathbb{Z}}}$ and
$T_{\alpha}f=g$ exists uniformly on $\overline{q^{\mathbb{Z}}}\times S,$
where $\psi\in AP(\overline{q^{\mathbb{Z}}},\mathbb{X})$  and $g\in AP(\overline{q^{\mathbb{Z}}}\times D,\mathbb{X})$. Hence, $g$ is uniformly
continuous on $\overline{q^{\mathbb{Z}}}\times S$, then for any given
$\varepsilon>0$, there exists $\delta(\frac{\varepsilon}{2})>0$ such
that for any $x_{1},x_{2}\in S$ and
$|x_{1}-x_{2}|<\delta(\frac{\varepsilon}{2})$, we have
\begin{equation*}
|g(t,x_1)-g(t,x_2)|<\frac{\varepsilon}{2}, \,\, \forall t\in\overline{q^{\mathbb{Z}}}.
\end{equation*}
In addition, there exists an   $N_{0}(\varepsilon)>0$ such that for $n\geq
N_{0}(\varepsilon)$,
\begin{equation*}
|f(tq^{\alpha_{n}},x)-g(t,x)|<\frac{\varepsilon}{2},\quad
\forall(t,x)\in\overline{q^{\mathbb{Z}}}\times S
\end{equation*}
and
\begin{equation*}
|\varphi(tq^{\alpha_{n}})-\psi(t)|<\delta(\frac{\varepsilon}{2}),\quad
\forall t\in\overline{q^{\mathbb{Z}}},
\end{equation*}
where $\varphi(tq^\alpha_{n})\subset S, \psi(t)\subset S$ for all
$t\in\overline{q^{\mathbb{Z}}}.$ Therefore, for $n\geq N_{0}(\varepsilon)$,
\begin{eqnarray*}
|f(tq^{\alpha_{n}},\varphi(tq^{\alpha_{n}}))-g(t,\psi(t))|&\leq&|f(tq^{\alpha_{n}},\varphi(tq^{\alpha_{n}}))-g(t,\varphi(tq^{\alpha_{n}}))|\\
&&+|g(t,\varphi(tq^{\alpha_{n}}))-g(t,\psi(t))|<\varepsilon,
\end{eqnarray*}
which implies that $T_{\alpha}f(t,\varphi(t))=g(t,\psi(t))$ exists uniformly
on $\overline{q^{\mathbb{Z}}}$. Thus, $f(\cdot,\varphi(\cdot))\in AP(\overline{q^{\mathbb{Z}}},\mathbb{X})$. The proof is complete.
\end{proof}

\begin{definition}\label{def36}
 Let $f\in BC(\overline{q^{\mathbb{Z}}}\times D,\mathbb{X})$, then
$H(f)=\{g:\overline{q^{\mathbb{Z}}}\times D\rightarrow\mathbb{X}|$ there exits
$\alpha\subset\mathbb{Z}$ such that $T_{\alpha}f=g$ exists uniformly
on $\overline{q^{\mathbb{Z}}}\times S$$\}$ is called the hull of $f$.
\end{definition}

\begin{theorem}\label{thm37}
$H(f)$ is compact in uniform norm if and only if $f\in AP(\overline{q^{\mathbb{Z}}}\times D,\mathbb{X})$.
\end{theorem}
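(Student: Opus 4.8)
The plan is to prove the two implications separately, using the sequential characterizations of almost periodicity already established: Theorem \ref{thm34} (every integer sequence admits a subsequence along which the translates $T_\beta f$ converge uniformly to an almost periodic limit) and Theorem \ref{thm35} (the Bochner-type converse). Throughout I interpret ``compact in the uniform norm'' as sequential compactness of $H(f)$ for the topology of uniform convergence on $\overline{q^{\mathbb{Z}}}\times S$ over compact subsets $S\subset D$, so that it suffices to argue with sequences and their uniform limits.

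For the easier implication, suppose $H(f)$ is compact. First I would observe that the constant sequence $\alpha_{n}\equiv 0$ gives $T_{\alpha}f=f$, so $f\in H(f)$; compactness forces $H(f)$ to be bounded, whence $f\in BC(\overline{q^{\mathbb{Z}}}\times D,\mathbb{X})$. Next, given an arbitrary $\alpha^{'}\subset\mathbb{Z}$, each single translate $t\mapsto f(q^{\alpha^{'}_{n}}t,x)$ is the uniform limit of a constant sequence of translates and hence belongs to $H(f)$. By compactness the sequence $\{f(q^{\alpha^{'}_{n}}\cdot,\cdot)\}\subset H(f)$ has a subsequence, indexed by some $\alpha\subset\alpha^{'}$, converging uniformly on $\overline{q^{\mathbb{Z}}}\times S$; that is, $T_{\alpha}f$ exists uniformly. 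Since $f\in BC$ and this is exactly the hypothesis of the Bochner criterion, Theorem \ref{thm35} yields $f\in AP(\overline{q^{\mathbb{Z}}}\times D,\mathbb{X})$.

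For the converse, assume $f\in AP(\overline{q^{\mathbb{Z}}}\times D,\mathbb{X})$ and take an arbitrary sequence $\{g_{n}\}\subset H(f)$; I would establish sequential compactness by extracting a subsequence of $\{g_{n}\}$ converging to an element of $H(f)$. By Definition \ref{def36} each $g_{n}=T_{\alpha^{(n)}}f$ is a uniform limit of translates, so I can choose an index $k_{n}$ and set $\gamma_{n}=\alpha^{(n)}_{k_{n}}\in\mathbb{Z}$ with $\sup_{(t,x)\in\overline{q^{\mathbb{Z}}}\times S}|f(q^{\gamma_{n}}t,x)-g_{n}(t,x)|<\tfrac{1}{n}$. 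Applying Theorem \ref{thm34} to the integer sequence $\{\gamma_{n}\}$ produces a subsequence $\gamma^{'}\subset\{\gamma_{n}\}$ and a limit $g$ with $T_{\gamma^{'}}f=g$ uniformly on $\overline{q^{\mathbb{Z}}}\times S$ and $g\in AP(\overline{q^{\mathbb{Z}}}\times D,\mathbb{X})\subset H(f)$. Writing $g_{n_{j}}$ for the members of $\{g_{n}\}$ matched to $\gamma^{'}$, the estimate $\|g_{n_{j}}-g\|\le\|g_{n_{j}}-f(q^{\gamma_{n_{j}}}\cdot)\|+\|f(q^{\gamma_{n_{j}}}\cdot)-g\|$ together with the choice of $\gamma_{n}$ and the uniform convergence $T_{\gamma^{'}}f=g$ shows $g_{n_{j}}\to g$ uniformly, so $H(f)$ is sequentially compact.

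The main obstacle I anticipate is the bookkeeping in this last extraction: the elements of $H(f)$ are themselves double limits, so one must first approximate each $g_{n}$ by a genuine translate, then re-apply Theorem \ref{thm34} to the diagonal sequence $\{\gamma_{n}\}$, and finally verify that the retained subsequence of translates and the retained subsequence of the $g_{n}$ remain matched so that the triangle inequality closes. A secondary technical point, when $D$ is not itself compact, is that the ``uniform norm'' should be read through an exhaustion $S_{1}\subset S_{2}\subset\cdots$ of $D$ by compacts with a further diagonalization across them, so that the limit $g$ is produced uniformly on every compact $S$ simultaneously; this does not alter the argument but is needed to make the convergence statement precise.
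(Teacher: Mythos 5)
Your proposal is correct and follows essentially the same route as the paper: the forward implication extracts a uniformly convergent subsequence of translates from the compact set $H(f)$ and invokes the Bochner-type criterion (Theorem \ref{thm35}), while the converse approximates each $g_{n}\in H(f)$ by a genuine translate $f(q^{\gamma_{n}}\cdot,\cdot)$ to within $\tfrac{1}{n}$, applies Theorem \ref{thm34} to $\{\gamma_{n}\}$, and closes with the triangle inequality. Your version is in fact slightly more careful than the paper's, which omits the verification that $f\in BC(\overline{q^{\mathbb{Z}}}\times D,\mathbb{X})$ before citing the Bochner criterion.
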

\begin{proof}
If $H(f)$ is a compact set, then for each sequence $\alpha^{'}\subset\mathbb{Z}$,  there  exists a subsequence
$\alpha\subset \alpha'$ such that
$T_{\alpha}f$ exists uniformly on $\overline{q^{\mathbb{Z}}}\times D$.

Conversely, if $f \in AP(\overline{q^{\mathbb{Z}}}\times D,\mathbb{X})$ and $\{g_{n}\}\subset H(f)$, then we can choose
$\alpha^{'}=\{\alpha^{'}_{n}\}$ such that
\begin{equation*}
|f(tq^{\alpha^{'}_{n}},x)-g_{n}(t,x)|<\frac{1}{n},\quad\forall
(t,x)\in\overline{q^{\mathbb{Z}}}\times S.
\end{equation*}
So, we can find a subsequence $\alpha\subset \alpha^{'}$ such that $T_{\alpha}f$
exists uniformly  on $\overline{q^{\mathbb{Z}}}\times D$. Let $\beta\subset\gamma=\{n\}$ such that $\beta$ and
$\alpha$ are common subsequences, then
\begin{equation*}
f(tq^{\alpha_{n}},x)-g_{\beta_{n}}(t,x)\rightarrow 0\quad
(n\rightarrow\infty),\quad\forall (t,x)\in\overline{q^{\mathbb{Z}}}\times S,
\end{equation*}
hence
\begin{equation*}
\lim\limits_{n\rightarrow\infty}g_{\beta_n}(t,x)=T_{\alpha}f(t,x)\in
H(f),\quad \forall (t,x)\in\overline{q^{\mathbb{Z}}}\times S,
\end{equation*}
therefore, $H(f)$ is a compact set. The proof is complete.
\end{proof}

\begin{theorem}\label{thm38}
If $f \in AP(\overline{q^{\mathbb{Z}}}\times D,\mathbb{X})$, then
for any $g\in H(f), H(f)=H(g)$.
\end{theorem}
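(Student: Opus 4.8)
The plan is to prove the two inclusions $H(g)\subseteq H(f)$ and $H(f)\subseteq H(g)$ separately, after isolating the mechanism common to both as a \emph{transitivity} property of hull membership: whenever $T_{\gamma}a=b$ holds uniformly on $\overline{q^{\mathbb{Z}}}\times S$ for some $\gamma\subset\mathbb{Z}$, one has $H(b)\subseteq H(a)$. Once this is available, the theorem reduces to applying it twice together with a symmetry observation. Throughout, all $T$-limits are understood to be uniform on $\overline{q^{\mathbb{Z}}}\times S$ for an arbitrary compact $S\subset D$, and by Theorem \ref{thm34} every hull member that appears is again in $AP(\overline{q^{\mathbb{Z}}}\times D,\mathbb{X})$, so the relevant limits exist and are continuous.

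First I would establish the transitivity claim by a diagonal argument. Given $h\in H(b)$, choose $\alpha\subset\mathbb{Z}$ with $T_{\alpha}b=h$ uniformly. Since $T_{\gamma}a=b$ uniformly, substituting $t\mapsto tq^{\alpha_{n}}$ (legitimate because $\overline{q^{\mathbb{Z}}}$ is invariant under multiplication by integer powers of $q$) gives, for each fixed $n$, that $\lim_{m}a(tq^{\alpha_{n}+\gamma_{m}},x)=b(tq^{\alpha_{n}},x)$ uniformly in $(t,x)$. Hence I can pick $m(n)$ so large that $|a(tq^{\alpha_{n}+\gamma_{m(n)}},x)-b(tq^{\alpha_{n}},x)|<\frac1n$ for all $(t,x)\in\overline{q^{\mathbb{Z}}}\times S$. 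Setting $\delta_{n}=\alpha_{n}+\gamma_{m(n)}$ and combining this with $|b(tq^{\alpha_{n}},x)-h(t,x)|\to0$ via the triangle inequality yields $T_{\delta}a=h$ uniformly, so $h\in H(a)$; this proves $H(b)\subseteq H(a)$. Applying this to the hypothesis $g\in H(f)$, that is $T_{\beta}f=g$ uniformly for some $\beta$, gives at once $H(g)\subseteq H(f)$.

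For the reverse inclusion the key point is that $g\in H(f)$ forces $f\in H(g)$. Indeed, from $T_{\beta}f=g$ uniformly, for each $\varepsilon>0$ there is $N$ with $|f(tq^{\beta_{n}},x)-g(t,x)|<\varepsilon$ for all $(t,x)$ and all $n\geq N$; replacing $t$ by $tq^{-\beta_{n}}$ gives $|f(t,x)-g(tq^{-\beta_{n}},x)|<\varepsilon$ for all $(t,x)$ and $n\geq N$, which says precisely that $T_{-\beta}g=f$ uniformly, so $f\in H(g)$. Now the transitivity claim applied to $f\in H(g)$ delivers $H(f)\subseteq H(g)$. Combining the two inclusions gives $H(f)=H(g)$.

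I expect the main obstacle to be the bookkeeping in the diagonal argument, specifically verifying that the substitution $t\mapsto tq^{\alpha_{n}}$ (and likewise $t\mapsto tq^{-\beta_{n}}$) preserves uniformity over all of $\overline{q^{\mathbb{Z}}}\times S$, and that the single diagonal sequence $\delta$ produces genuine uniform convergence rather than merely pointwise convergence. This is exactly where the translation-invariance of $\overline{q^{\mathbb{Z}}}$ under multiplication by powers of $q$ does the essential work, making the substitution trick legitimate; the rest is a two-step triangle-inequality estimate.
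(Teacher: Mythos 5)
Your proposal is correct and follows essentially the same route as the paper: both halves rest on the diagonal/triangle-inequality argument showing $b\in H(a)\Rightarrow H(b)\subseteq H(a)$ (the paper realizes this with $\beta_n=\alpha_n^{(1)}+\alpha_n$ and the bound $<\frac{1}{n}$), combined with the substitution $t\mapsto tq^{-\beta_n}$ to get $f\in H(g)$ and then the reverse inclusion. Your version merely makes the transitivity step an explicit lemma applied twice, which is a clean reorganization rather than a different argument.
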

\begin{proof}
For any $h\in H(g)$ there exists $\alpha^{'}\subset \mathbb{Z}$ such
that $T_{\alpha^{'}} g=h$ exists uniformly
on $\overline{q^{\mathbb{Z}}}\times S$. Since $f \in AP(\overline{q^{\mathbb{Z}}}\times D,\mathbb{X})$,  one can extract a subsequence
$\alpha\subset \alpha'$ such that $T_\alpha
f(t,x)=\lim\limits_{n\rightarrow\infty}f(tq^{\alpha_{n}},x)$ exists
uniformly on $\overline{q^{\mathbb{Z}}}\times S$.
Because $g\in H(f)$, so there exists $\alpha^{(1)}\subset \mathbb{Z}$
such that
\begin{equation*}
\lim\limits_{n\rightarrow+\infty}f(tq^{\alpha_{n}^{(1)}},x)=g(t,x),\quad
\forall (t,x)\in\overline{q^{\mathbb{Z}}}\times S,
\end{equation*}
hence,
\begin{equation*}
\lim\limits_{n\rightarrow+\infty}f(tq^{\alpha_{n}^{(1)}+\alpha_n},x)=g(tq^{\alpha_n},x),\quad\forall
(t,x)\in\overline{q^{\mathbb{Z}}}\times S,
\end{equation*}
then we can take $\beta =\{\beta_n\}=\{\alpha_{n}^{(1)}+\alpha_n\}$
such that
\begin{equation*}
|f(tq^{\beta_{n}},x)-g(tq^{\alpha_{n}},x)|<\frac{1}{n}, \quad \forall
(t,x)\in\overline{q^{\mathbb{Z}}}\times S.
\end{equation*}
It follows from $T_{\beta}f=T_\alpha
g=T_{\alpha^{'}}g=h$  that $h\in H(f)$. Thus,
$H(g)\subseteq H(f)$.

On the other hand, for any $g \in H(f)$, there exists
$\alpha$ such that $T_{\alpha}f=g$ exists uniformly
on $\overline{q^{\mathbb{Z}}}\times S$, hence
\begin{equation*}
|f(tq^{\alpha_{n}},x)-g(t,x)|\rightarrow 0,\quad
n\rightarrow\infty,\,\,\forall (t,x)\in\overline{q^{\mathbb{Z}}}\times S.
\end{equation*}
Let $tq^{\alpha_{n}}=s$, we obtain
\begin{equation*}
|f(s,x)-g(sq^{-\alpha_{n}},x)|\rightarrow 0,\quad
n\rightarrow\infty,\,\,\forall
(s,x)\in\overline{q^{\mathbb{Z}}}\times S,
\end{equation*}
that is, $T_{-\alpha}g=f$ exists uniformly
on $\overline{q^{\mathbb{Z}}}\times S$. Thus, $f\in H(g)$.
Therefore, $H(f)\subseteq H(g)$.
The proof is complete.
\end{proof}

From  Definition \ref{def36} and Theorem \ref{thm38}, one can easily show that
\begin{theorem}\label{thm39}
If $f \in AP(\overline{q^{\mathbb{Z}}}\times D,\mathbb{X})$, then
for any $g\in H(f)$, $g \in AP(\overline{q^{\mathbb{Z}}}\times D,\mathbb{X})$.
\end{theorem}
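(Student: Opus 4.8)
The plan is to avoid any fresh sequential-extraction argument and instead read off the conclusion from the compactness characterization of almost periodicity (Theorem \ref{thm37}) combined with the hull-invariance statement (Theorem \ref{thm38}). The guiding idea is that, for an almost periodic $f$, almost periodicity is encoded entirely in the fact that the hull $H(f)$ is compact in the uniform norm, and Theorem \ref{thm38} asserts that this hull is unchanged when we pass from $f$ to any $g$ in its hull. Thus the whole argument reduces to the chain: $H(f)$ compact $\Rightarrow$ $H(g)$ compact $\Rightarrow$ $g$ almost periodic.

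Concretely, I would proceed as follows. First, since $f \in AP(\overline{q^{\mathbb{Z}}}\times D,\mathbb{X})$, Theorem \ref{thm37} gives that $H(f)$ is compact. Second, fix an arbitrary $g \in H(f)$; Theorem \ref{thm38} yields $H(g)=H(f)$, so $H(g)$ is compact as well. Third, before invoking the converse direction of Theorem \ref{thm37} for $g$, I must know that $g \in BC(\overline{q^{\mathbb{Z}}}\times D,\mathbb{X})$ so that $H(g)$ is a legitimate object: this is immediate, because $g = T_\alpha f$ is a uniform limit of translates $f(tq^{\alpha_n},\cdot)$, each continuous and bounded by the common bound supplied by Theorem \ref{thm33}, so $g$ is continuous and inherits that bound. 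Finally, applying Theorem \ref{thm37} to $g$, whose hull is compact, produces $g \in AP(\overline{q^{\mathbb{Z}}}\times D,\mathbb{X})$, as required.

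I do not anticipate a genuine obstacle; the only point that needs attention is the third step, namely confirming $g \in BC$ so that the compactness criterion is applicable to $g$ and no circularity is introduced (the forward direction of Theorem \ref{thm37} used there does not presuppose $g \in AP$). As an alternative that sidesteps the hull altogether, one could argue directly from Theorem \ref{thm34}: writing $g = T_\alpha f$ for a sequence $\alpha \subset \mathbb{Z}$ whose translates converge uniformly, Theorem \ref{thm34} applied to this same $\alpha$ extracts a subsequence $\beta \subset \alpha$ with $T_\beta f$ almost periodic; since a subsequence of a uniformly convergent sequence has the same limit, $T_\beta f = g$, whence $g \in AP(\overline{q^{\mathbb{Z}}}\times D,\mathbb{X})$. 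I would present the hull-based route, as it directly uses the apparatus (Definition \ref{def36} and Theorem \ref{thm38}) just established.
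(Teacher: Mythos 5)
Your hull-based argument is correct and is essentially the paper's own route: the paper dispatches Theorem \ref{thm39} with the one-line remark that it follows from Definition \ref{def36} and Theorem \ref{thm38}, and your chain ($H(f)$ compact by Theorem \ref{thm37}, hence $H(g)=H(f)$ compact by Theorem \ref{thm38}, hence $g\in AP(\overline{q^{\mathbb{Z}}}\times D,\mathbb{X})$ by the converse direction of Theorem \ref{thm37}), together with the check that $g$ is continuous and inherits the bound of $f$ so that the hull machinery applies to $g$ without circularity, is the natural filling-in of that remark. Your alternative via Theorem \ref{thm34} (extract $\beta\subset\alpha$ with $T_{\beta}f$ almost periodic and note that a subsequential limit of an already uniformly convergent sequence of translates equals the full limit $g$) is equally valid and even more direct.
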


\begin{theorem}\label{thm313}
If $f,g \in AP(\overline{q^{\mathbb{Z}}}\times D,\mathbb{X})$, then
\begin{itemize}
  \item [$(a)$]$f+g,fg \in AP(\overline{q^{\mathbb{Z}}}\times D,\mathbb{X})$,
  \item [$(b)$] if $\inf\limits_{t\in\overline{q^{\mathbb{Z}}}}|g(t,x)|\geq M>0$, then
$\frac{f}{g}\in AP(\overline{q^{\mathbb{Z}}}\times D,\mathbb{X})$.
\end{itemize}
\end{theorem}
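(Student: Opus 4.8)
The plan is to argue entirely through the sequential characterization of Definition~\ref{def35}, whose equivalence with Definition~\ref{def34} is supplied by Theorems~\ref{thm34} and~\ref{thm35}. Thus, to place a function $h$ in $AP(\overline{q^{\mathbb{Z}}}\times D,\mathbb{X})$ I need only show that every sequence $\alpha'\subset\mathbb{Z}$ has a subsequence $\alpha\subset\alpha'$ along which $T_\alpha h$ exists uniformly on $\overline{q^{\mathbb{Z}}}\times S$, for each compact $S\subset D$. Continuity of $f+g$, $fg$, and (since $|g|\ge M>0$) $f/g$ is immediate, so only the translation condition requires work.

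First I would fix an arbitrary $\alpha'\subset\mathbb{Z}$ and a compact $S\subset D$, and extract a subsequence that is simultaneously good for $f$ and $g$. Since $f\in AP$, pick $\beta\subset\alpha'$ with $T_\beta f=f_1$ uniformly; since $g\in AP$, pick a further $\alpha\subset\beta$ with $T_\alpha g=g_1$ uniformly. As any subsequence of a uniformly convergent sequence keeps the same uniform limit, $T_\alpha f=f_1$ still holds. For the sum this immediately gives $T_\alpha(f+g)=f_1+g_1$ uniformly, so $f+g\in AP(\overline{q^{\mathbb{Z}}}\times D,\mathbb{X})$.

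For the product I would invoke Theorem~\ref{thm33} to get a constant $K$ with $|f|,|g|\le K$ on $\overline{q^{\mathbb{Z}}}\times S$, whence also $|f_1|,|g_1|\le K$. The algebraic identity
\begin{equation*}
f(tq^{\alpha_n},x)g(tq^{\alpha_n},x)-f_1(t,x)g_1(t,x)=f(tq^{\alpha_n},x)\big[g(tq^{\alpha_n},x)-g_1(t,x)\big]+g_1(t,x)\big[f(tq^{\alpha_n},x)-f_1(t,x)\big]
\end{equation*}
then yields the uniform estimate
\begin{equation*}
|f(tq^{\alpha_n},x)g(tq^{\alpha_n},x)-f_1(t,x)g_1(t,x)|\le K\,|g(tq^{\alpha_n},x)-g_1(t,x)|+K\,|f(tq^{\alpha_n},x)-f_1(t,x)|,
\end{equation*}
and both terms on the right tend to $0$ uniformly by the choice of $\alpha$. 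Hence $T_\alpha(fg)=f_1g_1$ uniformly, so $fg\in AP(\overline{q^{\mathbb{Z}}}\times D,\mathbb{X})$.

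For the quotient in $(b)$ I would first establish that $1/g\in AP$ and then write $f/g=f\cdot(1/g)$ and invoke the product case. Given $|g(t,x)|\ge M>0$ on $\overline{q^{\mathbb{Z}}}\times S$, the uniform limit inherits $|g_1(t,x)|\ge M$, so that
\begin{equation*}
\left|\frac{1}{g(tq^{\alpha_n},x)}-\frac{1}{g_1(t,x)}\right|=\frac{|g_1(t,x)-g(tq^{\alpha_n},x)|}{|g(tq^{\alpha_n},x)|\,|g_1(t,x)|}\le\frac{1}{M^2}\,|g(tq^{\alpha_n},x)-g_1(t,x)|\to 0
\end{equation*}
uniformly; thus $T_\alpha(1/g)=1/g_1$ exists uniformly and $1/g\in AP(\overline{q^{\mathbb{Z}}}\times D,\mathbb{X})$. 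The only steps that need care are the persistence of the lower bound $M$ under the uniform limit, which keeps $1/g_1$ well defined and bounded, and the appeal to boundedness from Theorem~\ref{thm33} in the product estimate; the remaining work is routine subsequence bookkeeping.
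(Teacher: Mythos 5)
Your proof is correct and follows essentially the same route as the paper: part $(a)$ by successive subsequence extraction so that the translation limits of $f$ and $g$ exist uniformly along a common subsequence, and part $(b)$ by showing $1/g$ is almost periodic via the $M^{-2}$ estimate and then reducing $f/g=f\cdot(1/g)$ to the product case. The only differences are cosmetic: you phrase $(b)$ through the translation operator rather than the $\varepsilon$-translation numbers the paper uses, and you make explicit the boundedness from Theorem \ref{thm33} needed to pass to the limit in the product, a detail the paper leaves implicit.
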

\begin{proof}

$(a)$  Since $f,g \in AP(\overline{q^{\mathbb{Z}}}\times D,\mathbb{X})$,  from any sequence
$\{\alpha_{n}^{'}\}\subset \mathbb{Z}$ one can extract a subsequence
$\{\alpha_{n}\}$ such that $T_\alpha
f=\lim\limits_{n\rightarrow\infty}f(tq^{\alpha_{n}},x)$ exists
uniformly on $\overline{q^{\mathbb{Z}}}\times S$. Then from the $\{\alpha_{n}\}\subset \mathbb{Z}$ we can extract a subsequence
$\{\beta_{n}'\}$ such that $T_{\beta'}
g=\lim\limits_{n\rightarrow\infty}g(tq^{\beta_{n}'},x)$ exists
uniformly on $\overline{q^{\mathbb{Z}}}\times S$. Consequently, we can extract a subsequence
$\{\beta_{n} \}\subset \{\beta_n'\}$ such that $T_\beta
(f+g)=\lim\limits_{n\rightarrow\infty}(f(tq^{\beta_{n}},x)+g(tq^{\beta_{n}},x))$ and $T_\beta
(fg)=\lim\limits_{n\rightarrow\infty}(f(tq^{\beta_{n}},x)g(tq^{\beta_{n}},x))$ exists
uniformly on $\overline{q^{\mathbb{Z}}}\times S$. That is, $f+g,fg\in AP(\overline{q^{\mathbb{Z}}}\times D,\mathbb{X})$.

$(b)$ Because $g \in AP(\overline{q^{\mathbb{Z}}}\times D,\mathbb{X})$, so for any given $\varepsilon>0$
and each compact subset $S$ of $D$, there exists
$\ell(\varepsilon,S)>0$ such that each interval
$[q^p, q^pq^\ell]\cap{\overline{q^{\mathbb{Z}}}}$ contains   a $q^\tau(p,\varepsilon,S)\in [q^p, q^pq^\ell]\cap{\overline{q^{\mathbb{Z}}}} (p\in \mathbb{Z})$ such that
\begin{equation*}
|g(t q^\tau ,x)-g(t,x)|<M^2\varepsilon, \quad\forall t\in
\overline{q^{\mathbb{Z}}}\times S.
\end{equation*}
Now
\begin{eqnarray*}
\bigg|\frac{1}{g(tq^\tau,x)}-\frac{1}{g(t,x)}\bigg|&=&\frac{|g(tq^\tau,x)-g(t,x)|}{|g(tq^\tau,x)g(t,x)|}\\
&\leq&\frac{|g(tq^\tau,x)-g(t,x)|}{M^2}<\varepsilon
\end{eqnarray*}
for all $(t,x)\in \overline{q^{\mathbb{Z}}}\times S$. That is, $\frac{1}{g}\in AP(\overline{q^{\mathbb{Z}}}\times D,\mathbb{X})$. Now using   From $(a)$ it follows that  $f/g \in AP(\overline{q^{\mathbb{Z}}}\times D,\mathbb{X})$.
 The proof is
complete.
\end{proof}

\begin{theorem}
If $f_{n}\in AP(\overline{q^{\mathbb{Z}}}\times D,\mathbb{X}),n=1,2,\ldots$ and the sequence
$\{f_{n}\}$ converges uniformly  to $f$ on
$\overline{q^{\mathbb{Z}}}\times S$, then $f \in AP(\overline{q^{\mathbb{Z}}}\times D,\mathbb{X})$.
\end{theorem}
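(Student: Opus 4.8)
The plan is to run the classical $\varepsilon/3$ argument, adapted to the multiplicative translation structure of $\overline{q^{\mathbb{Z}}}$, working directly from Definition \ref{def34}. The key observation is that uniform convergence lets us transfer an $\varepsilon$-translation number of a single well-chosen member $f_N$ of the sequence into an $\varepsilon$-translation number of the limit $f$, keeping the same inclusion length.

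First I would fix an arbitrary $\varepsilon>0$ and a compact subset $S\subset D$. By the uniform convergence of $\{f_n\}$ to $f$ on $\overline{q^{\mathbb{Z}}}\times S$, I choose $N=N(\varepsilon,S)$ so that $|f_N(t,x)-f(t,x)|<\varepsilon/3$ for all $(t,x)\in\overline{q^{\mathbb{Z}}}\times S$. Since $f_N\in AP(\overline{q^{\mathbb{Z}}}\times D,\mathbb{X})$, Definition \ref{def34} supplies an inclusion length $\ell(\varepsilon/3,S)>0$ such that every interval $[q^p,q^pq^\ell]\cap\overline{q^{\mathbb{Z}}}$ contains a $q^\tau\in E\{\varepsilon/3,f_N,S\}$. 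For such a $q^\tau$ I would then split by the triangle inequality,
\[
|f(tq^\tau,x)-f(t,x)|\leq|f(tq^\tau,x)-f_N(tq^\tau,x)|+|f_N(tq^\tau,x)-f_N(t,x)|+|f_N(t,x)-f(t,x)|.
\]
The first and third terms are each below $\varepsilon/3$ by the choice of $N$ (note the first term is evaluated at the translated point $tq^\tau$, which still lies in $\overline{q^{\mathbb{Z}}}$, so the uniform bound applies), while the middle term is below $\varepsilon/3$ because $q^\tau$ is an $\varepsilon/3$-translation number of $f_N$. Hence $|f(tq^\tau,x)-f(t,x)|<\varepsilon$ for all $(t,x)\in\overline{q^{\mathbb{Z}}}\times S$, so $q^\tau\in E\{\varepsilon,f,S\}$ and $f$ inherits the inclusion length $\ell(\varepsilon/3,S)$.

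It remains to note that $f\in C(\overline{q^{\mathbb{Z}}}\times D,\mathbb{X})$: a uniform limit of continuous functions is continuous on each $\overline{q^{\mathbb{Z}}}\times S$, and since continuity is local and $S\subset D$ is an arbitrary compact set, $f$ is continuous on all of $\overline{q^{\mathbb{Z}}}\times D$. Combining this with the translation-number property just established gives $f\in AP(\overline{q^{\mathbb{Z}}}\times D,\mathbb{X})$. I do not anticipate a genuine obstacle here; the only point requiring a little care is that $N$ depends on both $\varepsilon$ and $S$, so the inclusion length for $f$ must be read off as $\ell(\varepsilon/3,S)$ rather than $\ell(\varepsilon,S)$ — but this is harmless, since Definition \ref{def34} only asks for the existence of \emph{some} inclusion length for each prescribed accuracy.
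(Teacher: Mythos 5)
Your argument is correct and is essentially identical to the paper's proof: both select an index $N$ (the paper's $n_0$) via uniform convergence, take an $\varepsilon/3$-translation number of $f_N$ together with its inclusion length, and conclude by the triangle inequality that it serves as an $\varepsilon$-translation number of $f$. Your additional remarks on the continuity of the uniform limit and on the dependence of the inclusion length are harmless elaborations of the same approach.
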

\begin{proof}
For any $\varepsilon>0$, there exists sufficiently large $n_0$ such
that for all $(t,x)\in\overline{q^{\mathbb{Z}}}\times S$,
\begin{equation*}
|f(t,x)-f_{n_0}(t,x)|<\frac{\varepsilon}{3}.
\end{equation*} Take
$q^\tau\in E\big\{f_{n_0},\DF{\varepsilon}{3},S\big\}$, then for all
$(t,x)\in\overline{q^{\mathbb{Z}}}\times S$, we have
{\setlength\arraycolsep{2pt}\begin{eqnarray*}
|f(tq^\tau,x)-f(t,x)|&\leq&|f(tq^\tau,x)-f_{n_0}(tq^\tau,x)|+|f_{n_0}(tq^\tau,x)-f_{n_0}(t,x)|\\
&&+|f_{n_0}(t,x)-f(t,x)|<\varepsilon,
\end{eqnarray*}}
that is, $q^\tau\in E(f,\varepsilon,S)$. Therefore,
 $f \in AP(\overline{q^{\mathbb{Z}}}\times D,\mathbb{X})$. This completes the
proof.
\end{proof}

\begin{theorem}\label{thm316}
Let $f \in AP(\overline{q^{\mathbb{Z}}}\times D,\mathbb{X})$ and
$
F(t,x):=\int_{0}^{t}f(s,x)d_q s,
$
then $F \in AP(\overline{q^{\mathbb{Z}}}\times D,\mathbb{X})$  if
and only if $F$ is bounded on $\overline{q^{\mathbb{Z}}}\times S$.
\end{theorem}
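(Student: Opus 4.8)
The plan is to prove the two implications separately, dispatching the forward direction in one line and concentrating all the real work on the converse. For necessity, suppose $F\in AP(\overline{q^{\mathbb{Z}}}\times D,\mathbb{X})$. Then Theorem \ref{thm33} applies verbatim to $F$ and already gives that $F$ is uniformly continuous and bounded on $\overline{q^{\mathbb{Z}}}\times S$; nothing further is needed here. So the content of the statement is the sufficiency: boundedness of $F$ forces $F\in AP$.

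For sufficiency I would argue through the normality criterion of Theorem \ref{thm35}. Since $F$ is a $q$-integral of a continuous function it is itself continuous, and by hypothesis it is bounded, so $F\in BC(\overline{q^{\mathbb{Z}}}\times D,\mathbb{X})$; hence it suffices to show that every sequence $\alpha'\subset\mathbb{Z}$ admits a subsequence $\alpha\subset\alpha'$ for which $T_{\alpha}F$ exists uniformly on $\overline{q^{\mathbb{Z}}}\times S$. First I would invoke $f\in AP(\overline{q^{\mathbb{Z}}}\times D,\mathbb{X})$ and Theorem \ref{thm34} to extract $\beta\subset\alpha'$ with $T_{\beta}f=g$ uniformly on $\overline{q^{\mathbb{Z}}}\times S$. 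The basic object to track is the increment of $F$ under a multiplicative shift, namely the quantum analogue of $\int_t^{t+\tau}f$,
\[
F(tq^{\tau},x)-F(t,x)=\int_{t}^{tq^{\tau}}f(s,x)\,d_qs ,
\]
together with the decomposition $F(tq^{\beta_n},x)=F(q^{\beta_n},x)+\int_{q^{\beta_n}}^{tq^{\beta_n}}f(s,x)\,d_qs$, which splits each translate of $F$ into a value at the reference point $q^{\beta_n}$ and an integral of $f$ over a moving window.

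Next I would combine two compactness steps. Because $F$ is bounded, a further diagonal subsequence (taken over the countable family of base points $\{q^{j}\}$) makes the reference values $F(q^{\beta_n},x)$ converge uniformly in $x\in S$; the difference of reference values then tends to $0$. The remaining task is to show that the window integrals $\int_{q^{\beta_n}}^{tq^{\beta_n}}f(s,x)\,d_qs$ converge uniformly in $(t,x)$, so that $\{F(tq^{\beta_n},x)\}$ is uniformly Cauchy and $T_{\beta}F$ exists uniformly, whereupon Theorem \ref{thm35} yields $F\in AP(\overline{q^{\mathbb{Z}}}\times D,\mathbb{X})$.

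The hard part will be exactly this uniformity in $t$. For a fixed $t=q^{j}$ the window $[q^{\beta_n},q^{j+\beta_n}]\cap\overline{q^{\mathbb{Z}}}$ has a fixed number of steps but a location that drifts to $\infty$ as $n\to\infty$, so the mere uniform convergence $T_\beta f=g$ does not let one pass the limit inside, and naive bounds on the integrand do not survive integration over all $t$. This is the quantum-scale manifestation of the Bohl--Bohr obstruction that a primitive of an almost periodic function is almost periodic precisely when it remains bounded. I expect it to be resolved by playing the boundedness of $F$ against the almost periodicity of $f$: the relatively dense set of $\varepsilon$-translation numbers $q^{\tau}$ of $f$ guaranteed by Definition \ref{def34} can be used to transport any large $t$ back into a bounded reference window where uniform convergence is already known, while the a priori bound on $F$ prevents the transported error from accumulating along the window. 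Making this transport-and-bound estimate quantitative, uniformly in $t\in\overline{q^{\mathbb{Z}}}$ and $x\in S$, is the crux of the whole argument; once it is secured the sufficiency follows as indicated above.
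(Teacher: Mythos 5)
Your necessity direction is fine and matches the paper: Theorem \ref{thm33} gives boundedness of any almost periodic function at once. For sufficiency, however, your proposal stops exactly where the content of the theorem begins. You correctly reduce the problem, via the decomposition $F(tq^{\beta_n},x)=F(q^{\beta_n},x)+\int_{q^{\beta_n}}^{tq^{\beta_n}}f(s,x)\,d_qs$ and the normality criterion of Theorem \ref{thm35}, to a uniform-in-$t$ estimate on the moving window integrals, and you correctly diagnose this as the Bohl--Bohr obstruction. But the paragraph that is supposed to overcome it says only that you ``expect it to be resolved by playing the boundedness of $F$ against the almost periodicity of $f$'' and that making the estimate quantitative ``is the crux of the whole argument.'' That crux is never carried out. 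Everything you do prove is routine; the single step that requires an idea is asserted rather than established, so the proof is incomplete.

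For comparison, the paper does not go through normality at all: it verifies Definition \ref{def34} for $F$ directly by the classical sup/inf trick, and this is precisely the quantitative transport-and-bound estimate you gestured at. Setting $G=\sup F$ and $g=\inf F$ over $\overline{q^{\mathbb{Z}}}\times S$, one picks $t_1,t_2$ with $F(t_1,x)<g+\varepsilon/6$ and $F(t_2,x)>G-\varepsilon/6$, and uses $\varepsilon_1$-translation numbers of $f$ (with $\varepsilon_1$ scaled by the separation of $t_1$ and $t_2$) to translate this pair into every interval $[\alpha,\alpha q^{L}]\cap\overline{q^{\mathbb{Z}}}$ of a fixed logarithmic length $L$; thus near-extremal points $s_1,s_2$ of $F$, with $F(s_1,x)<g+\varepsilon/2$ and $F(s_2,x)>G-\varepsilon/2$, are relatively dense. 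Then for arbitrary $t$ and any $q^{\tau}\in E(f,\varepsilon_2,S)$ with $\varepsilon_2=\varepsilon/(2L)$, one writes $F(tq^{\tau},x)-F(t,x)$ as $F(s_iq^{\tau},x)-F(s_i,x)$ plus two integrals of $f(\cdot q^\tau,x)-f(\cdot,x)$ over a window of at most $L$ steps; the choice $i=1$ gives the lower bound $-\varepsilon$ (since $F(s_1q^{\tau},x)\ge g$ while $F(s_1,x)<g+\varepsilon/2$), and $i=2$ gives the upper bound $\varepsilon$. If you want to keep your normality framework, you would still need to supply an argument of exactly this strength to control the window integrals uniformly in $t$; as written, your proposal does not prove the theorem.
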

\begin{proof}
If $F \in AP(\overline{q^{\mathbb{Z}}}\times D,\mathbb{X})$, then $F$ is bounded on $\overline{q^{\mathbb{Z}}}\times S$.

If $F$ is bounded, without loss of generality, we can assume that
$F$ is a real-valued function. Denote
$$G:=\sup\limits_{(t,x)\in\overline{q^{\mathbb{Z}}}\times S}F(t,x)>g:=\inf_{(t,x)\in\overline{q^{\mathbb{Z}}}\times S}F(t,x),$$
then for any $\varepsilon>0$, there exist $t_1=q^{n_1}$ and $t_2=q^{n_2}$ such that
\begin{equation*}
F(t_1,x)<g+\frac{\varepsilon}{6},\qquad
F(t_2,x)>G-\frac{\varepsilon}{6},\quad\forall x\in S.
\end{equation*}
Let $\ell=\ell(\varepsilon_1,S)$ be an inclusion length of
$E(f,\varepsilon_1,S)$, where
$\varepsilon_1=\frac{\varepsilon}{6d},d=|t_1-t_2|$. For any
$\alpha\in\overline{q^{\mathbb{Z}}}$, take $\tau\in E(f,\varepsilon_1,S)\cap
[\alpha t_1^{-1}, \alpha t_1^{-1}q^\ell]$. Let $s_i=t_iq^\tau
(i=1,2)$ and $L=\ell+|n_1-n_2|$, then $s_1,s_2\in
[\alpha,\alpha q^L]\cap\overline{q^{\mathbb{Z}}}$. For all $x\in S$, we have
{\setlength\arraycolsep{2pt}\begin{eqnarray*}
F(s_2,x)-F(s_1,x)&=& F(t_2,x)-F(t_1,x)-\int_{t_1}^{t_2}f(t,x)d_q
t+\int_{t_1 q^\tau}^{t_2 q^\tau}f(t,x)d_q t\\
&=& F(t_2,x)-F(t_1,x)+\int_{t_1}^{t_2}[f(tq^\tau,x)-f(t,x)]d_q t\\
&>&G-g-\frac{\varepsilon}{3}-\varepsilon_1d=G-g-\frac{\varepsilon}{2},
\end{eqnarray*}}
that is
\begin{equation*}
(F(s_1,x)-g)+(G-F(s_2,x))<\frac{\varepsilon}{2}.
\end{equation*}
Since
\begin{equation*}
F(s_1,x)-g\geq 0 \quad \mathrm{and} \quad G-F(s_2,x)\geq 0,
\end{equation*}
we have proved that   in any interval  $[\alpha,\alpha q^L]\cap \overline{q^{\mathbb{Z}}}$, there exist $s_1$ and $s_2$ such that
\begin{equation*}
F(s_1,x)<g+\frac{\varepsilon}{2} \quad \mathrm{and} \quad
F(s_2,x)>G-\frac{\varepsilon}{2}.
\end{equation*}

Now, let $\varepsilon_2=\DF{\varepsilon}{2L}$,  we will show that if $q^\tau\in E(f,\varepsilon_2,S)$,
then $q^\tau\in E(F,\varepsilon,S)$. In fact, for any
$(t,x)\in\overline{q^{\mathbb{Z}}}\times S$, we can take $s_1,s_2\in
[t,tq^L]\cap\overline{q^{\mathbb{Z}}}$ such that
\begin{equation*}
F(s_1,x)<g+\frac{\varepsilon}{2}\quad \mathrm{and} \quad
F(s_2,x)>G-\frac{\varepsilon}{2},
\end{equation*}
thus, for $q^\tau\in E(f,\varepsilon_2,S)$, we have
{\setlength\arraycolsep{2pt}\begin{eqnarray*}
F(tq^\tau,x)-F(t,x)&=&F(s_1q^\tau,x)-F(s_1,x)+\int_{t}^{s_1}f(t,x)d_q
t-\int_{tq^\tau}^{s_1q^\tau}f(t,x)d_q t\\
&>&g-(g+\frac{\varepsilon}{2})-\int_{t}^{t_1}[f(tq^\tau,x)-f(t,x)]d_q
t\\
&>&-\frac{\varepsilon}{2}-\varepsilon_2L=-\varepsilon
\end{eqnarray*}}
and
{\setlength\arraycolsep{2pt}\begin{eqnarray*}
F(tq^\tau,x)-F(t,x)&=&
F(s_2q^\tau,x)-F(s_2,x)+\int_{t}^{s_2}f(t,x)d_q
t-\int_{tq^\tau}^{s_2q^\tau}f(t,x)d_q t\\
&<&\frac{\varepsilon}{2}+\varepsilon_2L=\varepsilon.
\end{eqnarray*}}
That is, for $q^\tau\in E(f,\varepsilon_2,S)$,  $q^\tau\in
E(F,\varepsilon,S)$, therefore,  $F \in AP(\overline{q^{\mathbb{Z}}}\times D,\mathbb{X})$. The proof is complete.
\end{proof}

\begin{theorem}\label{thm317}
If $f \in AP(\overline{q^{\mathbb{Z}}}\times D,\mathbb{X})$,
$F(\cdot)$ is uniformly continuous on the value field of $f$,
then $F\circ f \in AP(\overline{q^{\mathbb{Z}}}\times D,\mathbb{X})$.
\end{theorem}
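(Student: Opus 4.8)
The plan is to exploit the uniform continuity of $F$ to convert $\delta$-translation numbers of $f$ into $\varepsilon$-translation numbers of $F\circ f$, working directly from Definition \ref{def34}. The point is that almost periodicity of $f$ is a statement about how little $f$ is shifted by a suitable $q^\tau$, and a uniformly continuous outer map preserves smallness of differences uniformly; so no new estimates on $F$ beyond its modulus of continuity are needed.

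First I would record the preliminaries. By Theorem \ref{thm33}, $f$ is bounded on $\overline{q^{\mathbb{Z}}}\times S$ for every compact $S\subset D$, so its value field is a bounded set on which $F$ is, by hypothesis, uniformly continuous. Since $F$ is in particular continuous and $f\in C(\overline{q^{\mathbb{Z}}}\times D,\mathbb{X})$, the composition $F\circ f$ lies in $C(\overline{q^{\mathbb{Z}}}\times D,\mathbb{X})$; this handles the continuity requirement built into the definition of $AP(\overline{q^{\mathbb{Z}}}\times D,\mathbb{X})$.

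Next comes the main step. Fix $\varepsilon>0$ and a compact subset $S\subset D$. By the uniform continuity of $F$ on the value field of $f$, there exists $\delta=\delta(\varepsilon)>0$ such that $|u-v|<\delta$ implies $|F(u)-F(v)|<\varepsilon$ whenever $u,v$ lie in that value field. Applying Definition \ref{def34} to $f$ with the tolerance $\delta$ yields an inclusion length $\ell(\delta,S)>0$ such that every interval $[q^p,q^pq^\ell]\cap\overline{q^{\mathbb{Z}}}$ contains some $q^\tau\in E\{\delta,f,S\}$, i.e.
\begin{equation*}
|f(tq^\tau,x)-f(t,x)|<\delta,\qquad\forall (t,x)\in\overline{q^{\mathbb{Z}}}\times S.
\end{equation*}
Since both $f(tq^\tau,x)$ and $f(t,x)$ lie in the value field of $f$, the choice of $\delta$ gives
\begin{equation*}
|F(f(tq^\tau,x))-F(f(t,x))|<\varepsilon,\qquad\forall (t,x)\in\overline{q^{\mathbb{Z}}}\times S,
\end{equation*}
so $q^\tau\in E\{\varepsilon,F\circ f,S\}$. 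Thus $E\{\delta,f,S\}\subset E\{\varepsilon,F\circ f,S\}$, and the number $\ell(\delta,S)$ serves as an inclusion length for $F\circ f$ at tolerance $\varepsilon$. As $\varepsilon>0$ and $S$ were arbitrary, this shows $F\circ f\in AP(\overline{q^{\mathbb{Z}}}\times D,\mathbb{X})$.

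I do not expect a serious obstacle here; the argument is essentially the one-line observation that uniform continuity of $F$ lets a single modulus $\delta$ work simultaneously at every pair $(t,x)$. The only point that needs a little care is the domain of $F$: one must invoke the boundedness from Theorem \ref{thm33} to guarantee that the values $f(t,x)$ and their shifts all remain in a fixed set on which $F$ is uniformly continuous, so that the same $\delta$ is valid for all $(t,x)\in\overline{q^{\mathbb{Z}}}\times S$.
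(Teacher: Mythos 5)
Your proof is correct, but it takes a genuinely different route from the paper's. The paper argues via the translation-operator (Bochner-type) characterization: using the equivalence established in Theorems \ref{thm34} and \ref{thm35}, from any sequence one extracts a subsequence $\alpha$ along which $T_\alpha f$ exists uniformly on $\overline{q^{\mathbb{Z}}}\times S$, and then uniform continuity of $F$ lets the limit pass through $F$, giving $T_\alpha(F\circ f)=F(T_\alpha f)$ uniformly, whence $F\circ f$ is almost periodic by Theorem \ref{thm35}. You instead work directly from the Bohr-style Definition \ref{def34}: the modulus of continuity $\delta(\varepsilon)$ of $F$ yields the containment $E\{\delta,f,S\}\subset E\{\varepsilon,F\circ f,S\}$, so the inclusion length $\ell(\delta,S)$ for $f$ serves verbatim for $F\circ f$. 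Your version is more elementary and self-contained (it does not lean on the sequential-compactness equivalence, and it makes the transfer of the inclusion length explicit), while the paper's is shorter given the machinery it has already built; indeed your write-up is arguably tighter, since the paper's one-line proof elides the ``for every sequence, pass to a subsequence'' quantifier that the sequential definition requires. Your remark about invoking Theorem \ref{thm33} for boundedness is harmless but not strictly needed, since the hypothesis already grants uniform continuity of $F$ on the whole value field of $f$.
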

\begin{proof}
Since $F$ is uniformly continuous on the value field of
$f$ and $f\in AP(\overline{q^{\mathbb{Z}}}\times D,\mathbb{X})$, there exists a sequence $\alpha=\{\alpha_n\}\subset \mathbb{Z}$
such that
\begin{equation*}
T_{\alpha}(F\circ
f)=\lim\limits_{n\rightarrow+\infty}F(f(tq^{\alpha_{n}},x))=F(\lim\limits_{n\rightarrow+\infty}f(tq^{\alpha_{n}},x))=F(T_{\alpha}f)
\end{equation*}
holds uniformly on $\overline{q^{\mathbb{Z}}}\times S$. Hence, $F\circ f \in AP(\overline{q^{\mathbb{Z}}}\times D,\mathbb{X})$. The proof is complete.
\end{proof}

\begin{theorem}\label{thm318}
A function $f \in AP(\overline{q^{\mathbb{Z}}}\times D,\mathbb{X})$
if and only if for every pair of sequences
$\alpha^{'},\beta^{'}\subset \mathbb{Z}$, there exist common subsequences
$\alpha\subset\alpha^{'},\beta\subset\beta^{'}$ such that
\begin{equation}\label{e35}
T_{\alpha+\beta}f=T_{\alpha}T_{\beta}f,\quad\forall
(t,x)\in\overline{q^{\mathbb{Z}}}\times S.
\end{equation}
\end{theorem}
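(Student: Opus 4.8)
The plan is to prove both implications through the sequential characterization of almost periodicity furnished by Definition \ref{def35} together with Theorems \ref{thm34} and \ref{thm35}, namely that $f\in AP(\overline{q^{\mathbb{Z}}}\times D,\mathbb{X})$ is equivalent to the statement that every sequence $\alpha'\subset\mathbb{Z}$ admits a subsequence $\alpha$ for which $T_\alpha f$ exists uniformly on $\overline{q^{\mathbb{Z}}}\times S$. Necessity of \eqref{e35} will be obtained by a careful extraction of common subsequences followed by a single substitution inside a uniform limit, while sufficiency will be argued by contradiction against this sequential criterion.

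For necessity, given $\alpha',\beta'$, I would first use $f\in AP$ to extract from $\beta'$ a subsequence, together with the index-matched subsequence of $\alpha'$, so that $T_\beta f=g$ exists uniformly on $\overline{q^{\mathbb{Z}}}\times S$; by Theorem \ref{thm34} the limit $g$ again lies in $AP(\overline{q^{\mathbb{Z}}}\times D,\mathbb{X})$. A further common thinning, legitimate since subsequences of uniformly convergent sequences keep the same limit, arranges simultaneously that $T_\alpha g$ exists uniformly and that $T_{\alpha+\beta}f$ exists uniformly, using $f,g\in AP$. The key identity then comes from the substitution $s=tq^{\alpha_n}$ in the uniform convergence $f(sq^{\beta_n},x)\to g(s,x)$: for every $(t,x)$,
\[
\big|f(tq^{\alpha_n+\beta_n},x)-g(tq^{\alpha_n},x)\big|\le\sup_{(s,x)\in\overline{q^{\mathbb{Z}}}\times S}\big|f(sq^{\beta_n},x)-g(s,x)\big|\longrightarrow 0 .
\]
Since also $g(tq^{\alpha_n},x)\to T_\alpha g(t,x)$ and $f(tq^{\alpha_n+\beta_n},x)\to T_{\alpha+\beta}f(t,x)$, letting $n\to\infty$ yields $T_{\alpha+\beta}f=T_\alpha g=T_\alpha T_\beta f$, which is exactly \eqref{e35}. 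This direction is essentially bookkeeping of common subsequences plus the one substitution.

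For sufficiency I would argue contrapositively: if $f\notin AP$, then by Definition \ref{def35} there is a sequence $\alpha'$ from which no subsequence makes $T_\alpha f$ converge uniformly on $\overline{q^{\mathbb{Z}}}\times S$. Passing to a subsequence along which $f(tq^{\alpha_n'},x)$ converges pointwise to some $g$, the failure of uniformity supplies $\varepsilon_0>0$, points $t_k=q^{j_k}\in\overline{q^{\mathbb{Z}}}$, values $x_k\in S$ and indices with $\big|f(t_kq^{\alpha_{n_k}'},x_k)-g(t_k,x_k)\big|\ge\varepsilon_0$, where $g(t_k,x_k)=\lim_m f(t_kq^{\alpha_m'},x_k)$. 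The decisive trick is that a multiplicative shift by the point $t_k$ is itself a translation by the integer $j_k$, so I set $\alpha'':=\{j_k\}$ and $\beta'':=\{\alpha_{n_k}'\}$ and apply the hypothesis \eqref{e35} to this pair. Evaluating the resulting pointwise identity $T_{\alpha''+\beta''}f=T_{\alpha''}T_{\beta''}f$ at the base point $t=1$ forces the limit of $f(q^{j_k}q^{\alpha_{n_k}'},\cdot)$ and the iterated limit built from $g$ to coincide, whereas the per-term gap $\ge\varepsilon_0$ persists under passage to the limit; this contradiction shows $f\in AP$.

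The step I expect to be the main obstacle is precisely this last contradiction in the presence of the second variable: the defect points $(t_k,x_k)$ carry a moving $x_k$, and the translation operators only shift the $t$-slot, so one must first compress $\{x_k\}$ using compactness of $S$ (say $x_k\to x_\ast$) and control $\big|f(\cdot,x_k)-f(\cdot,x_\ast)\big|$ along the shifted arguments. Without almost periodicity one cannot invoke the global uniform continuity of Theorem \ref{thm33}, so this is where genuine care is needed; I would also need boundedness of $f$ on $\overline{q^{\mathbb{Z}}}\times S$ to guarantee that the pointwise limits defining $g$ and the iterated limits actually exist along the extracted subsequences. Organizing these extractions so that all three limits in \eqref{e35} exist simultaneously while preserving the common-subsequence structure is the technical heart of the argument.
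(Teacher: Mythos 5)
Your forward direction is complete and is essentially the paper's own argument: extract $\beta\subset\beta'$ with $T_\beta f=g$ uniformly ($g\in AP$ by Theorem \ref{thm34}), thin further so that $T_\alpha g$ and $T_{\alpha+\beta}f$ also exist uniformly, and identify the two limits via the substitution $s=tq^{\alpha_n}$; the paper does exactly this with an $\varepsilon/3$ triangle inequality in place of your two-term sup estimate. Your backward direction also follows the paper's skeleton (encode the defect point $t_k=q^{j_k}$ as an integer translation, apply \eqref{e35} to the pair of sequences so obtained, and evaluate at the base point $t=q^0=1$), the only structural difference being that the paper phrases the failure of uniform convergence as a Cauchy-type defect between two terms $f(q^{s_n+\alpha_n},x)$ and $f(q^{s_n+\beta_n},x)$, while you compare a term against the pointwise limit $g$. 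Incidentally, your worry about boundedness is unnecessary: the convention that $T$ is written only when the limits exist means \eqref{e35} itself guarantees the existence of the pointwise limits along the extracted subsequences.

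However, you explicitly leave unresolved the step you correctly identify as the crux, namely the moving argument $x_k$ in the defect inequality $|f(t_kq^{\alpha_{n_k}'},x_k)-g(t_k,x_k)|\ge\varepsilon_0$, and the route you sketch for it (compress $x_k\to x_\ast$ and control $|f(\cdot,x_k)-f(\cdot,x_\ast)|$) would indeed stall, since without almost periodicity you have no uniform continuity in $x$ along the shifted arguments. The missing idea is that no continuity in $x$ is needed at all: the limits in \eqref{e35} should be read as uniform on $\overline{q^{\mathbb{Z}}}\times S$ (this is precisely what the forward direction delivers, so it is the correct reading for an equivalence). Writing $h:=T_{\alpha''+\beta''}f=T_{\alpha''}T_{\beta''}f$, uniformity gives, for all large $k$ and \emph{all} $x\in S$ simultaneously, $|f(q^{j_k+\alpha_{n_k}'},x)-h(1,x)|<\varepsilon_0/3$ and $|g(q^{j_k},x)-h(1,x)|<\varepsilon_0/3$; evaluating the $k$-th inequalities at $x=x_k$ yields $|f(q^{j_k+\alpha_{n_k}'},x_k)-g(q^{j_k},x_k)|<2\varepsilon_0/3$, contradicting the defect. (To be fair, the paper glosses over the same point: its inequality \eqref{e36} is written with a fixed $x$, and its final line reads ``taking $t=0$'' where $t=q^0=1$ is meant.) As written, though, your sufficiency half stops short of a proof.
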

\begin{proof}
If $f \in AP(\overline{q^{\mathbb{Z}}}\times D,\mathbb{X})$, then for
every pair of sequences $\alpha^{'},\beta^{'}\subset \mathbb{Z}$, there exists
subsequence $\beta^{(2)}\subset\beta^{'}$ such that
$T_{\beta^{(2)}}f=g\in AP(\overline{q^{\mathbb{Z}}}\times D,\mathbb{X})$
uniformly holds on $\overline{q^{\mathbb{Z}}}\times S$.
One can
take $\alpha^{(2)}\subset\alpha^{'}$ and $\alpha^{(2)} ,\beta^{(2)}$
are the common subsequences of $\alpha^{'},\beta^{'}$, respectively.
Hence, there exists $\alpha^{(3)}\subset\alpha^{(2)}$ such that
$T_{\alpha^{(3)}}g=h \in AP(\overline{q^{\mathbb{Z}}}\times D,\mathbb{X})$
uniformly holds on $\overline{q^{\mathbb{Z}}}\times S$.

Similarly, one can take $\beta^{(3)}\subset\beta^{(2)}$ and
$\beta^{(3)},\alpha^{(3)}$ are the common subsequences of
$\beta^{(2)},\alpha^{(2)}$, respectively. Hence, there exist common
subsequence $\alpha\subset\alpha^{(3)},\beta\subset\beta^{(3)}$ such
that
$T_{\alpha+\beta}f=k \in AP(\overline{q^{\mathbb{Z}}}\times D,\mathbb{X})$
holds uniformly on $\overline{q^{\mathbb{Z}}}\times S$. According to the above, it
is easy to see that
$T_{\beta}f=g$ and $T_{\alpha}g=h$
 hold uniformly on $\overline{q^{\mathbb{Z}}}\times S$.

Hence, for any $\varepsilon>0$, if $n$ is sufficiently large,
then for any $(t,x)\in\overline{q^{\mathbb{Z}}}\times S$, we have
\begin{equation*}
|f(tq^{\alpha_n+\beta_n},x)-k(t,x)|<\frac{\varepsilon}{3},
\,\,
|g(t,x)-f(tq^{\beta_n},x)|<\frac{\varepsilon}{3},
\,\,
|h(t,x)-g(tq^{\alpha_n},x)|<\frac{\varepsilon}{3}.
\end{equation*}
Therefore
{\setlength\arraycolsep{2pt}\begin{eqnarray*}
|h(t,x)-k(t,x)|&\leq&|h(t,x)-g(tq^{\alpha_n},x)|+|g(tq^{\alpha_n},x)-f(tq^{\alpha_n+\beta_n},x)|\\
&&+|f(tq^{\alpha_n+\beta_n},x)-k(t,x)|<\varepsilon
\end{eqnarray*}}
holds for any $(t,x)\in\overline{q^{\mathbb{Z}}}\times S$.  Since $\varepsilon>0$
is arbitrary, we have $h(t,x)\equiv k(t,x)$, that is,
$T_{\alpha+\beta}f=T_\alpha T_\beta f$ holds uniformly on
$\overline{q^{\mathbb{Z}}}\times S$.

On the other hand, if \eqref{e35} holds, then for any sequence
$\gamma^{'}\subset \mathbb{Z}$, there exists subsequence
$\gamma\subset\gamma^{'}$ such that $T_{\gamma}f$ exists
uniformly on $\overline{q^{\mathbb{Z}}}\times S.$

Now, we will prove that $f\in AP(\overline{q^{\mathbb{Z}}}\times D,\mathbb{X})$.
If this is not true, that is, $T_{\gamma}f(t,x)$ does not converge
uniformly on $\overline{q^{\mathbb{Z}}}\times S$, then there exist
$\varepsilon_0>0$,  subsequences
$\alpha^{'}\subset\gamma,\beta^{'}\subset\gamma$ and $s^{'}$.
$\alpha^{'}=\{\alpha_{n}^{'}\},\beta^{'}=\{\beta_{n}^{'}\},s=\{s_{n}^{'}\}$
such that
\begin{equation}\label{e36}
|f(q^{s_{n}^{'}+\alpha_{n}^{'}},x)-f(q^{s_{n}^{'}+\beta_{n}^{'}},x)|\geq\varepsilon_0>0.
\end{equation}
From \eqref{e35} it follows that there exist common subsequences
$\alpha^{(2)}\subset\alpha^{'}, s^{(2)}\subset s^{'}$ such that
\begin{equation}\label{e37}
T_{s^{(2)}+\alpha^{(2)}}f(t,x)=T_{s^{(2)}}T_{\alpha^{(2)}}f(t,x), \,\, \forall (t,x)\in\overline{q^{\mathbb{Z}}}\times S.
\end{equation}
Taking $\beta^{(2)}\subset\beta^{'}$ and
$\beta^{(2)},\alpha^{(2)},s^{(2)}$ are common subsequences of
$\beta^{'},\alpha^{'},s^{'}$ respectively, such that
\begin{equation}\label{e38}
T_{s+\beta}f(t,x)=T_sT_\beta f(t,x), \,\, \forall (t,x)\in\overline{q^{\mathbb{Z}}}\times S.
\end{equation}
Similarly, we can take $\alpha\subset\alpha^{(2)}$ satisfying
$\alpha,\beta,s$ are common subsequences of
$\alpha^{(2)},\beta^{(2)},s^{(2)}$ respectively, by \eqref{e37}, we have
\begin{equation}\label{e39}
T_{s+\alpha}f(t,x)=T_sT_\alpha f(t,x), \,\, \forall (t,x)\in\overline{q^{\mathbb{Z}}}\times S.
\end{equation}
Since $T_\alpha f(t,x)=T_\beta f(t,x)=T_\gamma f(t,x)$, by
\eqref{e38} and \eqref{e39}, for all $(t,x)\in\overline{q^{\mathbb{Z}}}\times S$,
$
T_{s+\beta}f(t,x)=T_{s+\alpha}f(t,x),
$
that is, for all $(t,x)\in\overline{q^{\mathbb{Z}}}\times S$,
$
\lim\limits_{n\rightarrow+\infty}f(tq^{s_n+\beta_n},x)=\lim\limits_{n\rightarrow+\infty}f(tq^{s_n+\alpha_n},x).
$
Taking $t=0$, this contradicts \eqref{e36}. Therefore,
$f \in AP(\overline{q^{\mathbb{Z}}}\times D,\mathbb{X})$. The proof
is complete.
\end{proof}

\section{Almost periodic dynamic equations on the quantum time scale}
\setcounter{equation}{0}\indent

Consider the following nonlinear quantum dynamic equation
\begin{equation}\label{e41}
D_qx =f(t,x),
\end{equation}
where $f\in C({\overline{q^{\mathbb{Z}}}}\times\mathbb{X}^{n},\mathbb{X}^{n})$.

Now, we establish a existence theorem for almost
periodic solution of \eqref{e41} by using Theorem \ref{thm38}.

\begin{theorem}\label{as}
If $\phi$ is an asymptotically almost periodic solution of \eqref{e41}
on $q^{\mathbb{Z}^+}$, then the almost periodic part of $\phi$ is   an almost periodic solution of \eqref{e41}.
\end{theorem}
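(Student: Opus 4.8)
The plan is to write the asymptotically almost periodic solution as $\phi=p+r$, where $p\in AP(\overline{q^{\mathbb{Z}}},\mathbb{X}^n)$ in the sense of Definition \ref{def34y} and $r(t)\to0$ as $t\to+\infty$, and to prove directly that the almost periodic part $p$ satisfies $D_qp(t)=f(t,p(t))$. I read the standing hypothesis as $f\in AP(\overline{q^{\mathbb{Z}}}\times\mathbb{X}^n,\mathbb{X}^n)$ in the sense of Definition \ref{def34b} (the $\hat T$-version, with its factor $q^\tau$), since otherwise the hull $H(f)$ and the homogeneity Theorem \ref{thm38} invoked here would be unavailable, and, as the computation below shows, the scaling inherent in $D_q$ forces precisely this second-type definition. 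Because $p$ is almost periodic it is bounded (Theorem \ref{thm33}) and $r$ is bounded, so $\phi$ takes values in some fixed compact set $S\subset\mathbb{X}^n$ that I use throughout. The whole argument hinges on choosing a single sequence of shifts that is well-behaved for $f$ and for $p$ simultaneously, and then passing to the limit in the $q$-difference quotient of $p$.

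First I would produce a sequence $\{\alpha_n\}\subset\mathbb{Z}$ with $\alpha_n\to+\infty$ such that $T_\alpha p=p$ uniformly on $\overline{q^{\mathbb{Z}}}$ and $\hat T_\alpha f=f$ uniformly on $\overline{q^{\mathbb{Z}}}\times S$. I expect this to be the main obstacle. Each of the translation-number sets $E\{\varepsilon,p\}$ and $E\{\varepsilon,f,S\}$ is relatively dense, and by applying the selection Theorem \ref{thm34} and its $\hat T$-analogue to the pair $(p,f)$ and using the homogeneity of the hull (Theorem \ref{thm38}) to guarantee that the limits can be taken to be $p$ and $f$ themselves rather than merely members of $H(p)$ and $H(f)$, one extracts such a common shift sequence. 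The delicate points are that $p$ and $f$ are almost periodic in the \emph{two different} senses of Definitions \ref{def34y} and \ref{def34b}, and that the shifts must be driven to $+\infty$ in order to annihilate the remainder $r$.

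Granting such a sequence, the limit passage is the core of the proof. Fix $t\in\overline{q^{\mathbb{Z}}}\setminus\{0\}$; for large $n$ the points $tq^{\alpha_n}$ and $qtq^{\alpha_n}$ lie in $q^{\mathbb{Z}^+}$ and tend to $+\infty$, so $r(tq^{\alpha_n})\to0$ and $r(qtq^{\alpha_n})\to0$, whence $\phi(tq^{\alpha_n})\to p(t)$. Using that $\phi$ solves \eqref{e41} at $tq^{\alpha_n}$, namely $\phi(qtq^{\alpha_n})-\phi(tq^{\alpha_n})=(q-1)tq^{\alpha_n}f(tq^{\alpha_n},\phi(tq^{\alpha_n}))$, I rewrite the difference quotient of $p$ as
\[
D_qp(t)=\lim_{n\to+\infty}\frac{p(qtq^{\alpha_n})-p(tq^{\alpha_n})}{(q-1)t}=\lim_{n\to+\infty}\Big[q^{\alpha_n}f(tq^{\alpha_n},\phi(tq^{\alpha_n}))-\frac{r(qtq^{\alpha_n})-r(tq^{\alpha_n})}{(q-1)t}\Big],
\]
where the first equality uses $T_\alpha p=p$, the second uses that $\phi$ solves \eqref{e41}, and the bracketed remainder term tends to $0$.

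It then remains to identify the surviving limit with $f(t,p(t))$, and here I split
\[
q^{\alpha_n}f(tq^{\alpha_n},\phi(tq^{\alpha_n}))-f(t,p(t))=\big[q^{\alpha_n}f(tq^{\alpha_n},\phi(tq^{\alpha_n}))-f(t,\phi(tq^{\alpha_n}))\big]+\big[f(t,\phi(tq^{\alpha_n}))-f(t,p(t))\big].
\]
The first bracket tends to $0$ because $\hat T_\alpha f=f$ holds uniformly in $x\in S$ — this is exactly the place where the factor $q^{\alpha_n}$ of Definition \ref{def34b} is indispensable, as it keeps the scaled quantity bounded — and the second tends to $0$ by continuity of $f(t,\cdot)$ together with $\phi(tq^{\alpha_n})\to p(t)$. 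Hence $D_qp(t)=f(t,p(t))$ for every $t\neq0$, and the value at $t=0$ follows by continuity; since $p$ is almost periodic, it is an almost periodic solution of \eqref{e41}, which is the claim. \qed
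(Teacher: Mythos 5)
Your proof is correct and follows the same basic strategy as the paper's --- translate along a sequence of exponents tending to $+\infty$ so that the remainder $r$ is annihilated, and identify the limit of the translated solution with a solution of the limiting equation --- but your execution differs in two substantive ways, both to your advantage. First, the paper runs the argument entirely with the operator $T$: it extracts $\alpha$ with $T_\alpha f=g$ and $T_\alpha p=\psi$, asserts without computation that $\psi$ solves the hull equation $D_qx=g(t,x)$, and then translates back via $T_{-\alpha}$. As your difference-quotient computation makes explicit, the identity $\phi(qtq^{\alpha_n})-\phi(tq^{\alpha_n})=(q-1)tq^{\alpha_n}f(tq^{\alpha_n},\phi(tq^{\alpha_n}))$ produces the factor $q^{\alpha_n}$ on the right, so with $\alpha_n\to+\infty$ the quantity $f(tq^{\alpha_n},x)$ alone cannot converge to anything useful; it is $q^{\alpha_n}f(tq^{\alpha_n},x)$ that must converge, i.e.\ the $\hat T$/Definition \ref{def34b} sense is the one forced by the scaling in $D_q$. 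Your proof supplies exactly the analytic step the paper omits and corrects its choice of translation operator. Second, you bypass the hull round-trip by selecting a common sequence along which $T_\alpha p=p$ and $\hat T_\alpha f=f$ simultaneously; this is a standard recurrence property of almost periodic functions, though you are right to flag that establishing a relatively dense set of \emph{common} translation numbers for $p$ and $f$, which are almost periodic in two different senses, is the one lemma neither you nor the paper proves in detail. The treatment of $t=0$ by continuity of $t\mapsto f(t,p(t))$ is fine. In short: same architecture as the paper, but your version contains the actual proof.
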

\begin{proof}
Let $\phi=p+q$,
where $p$ is the almost periodic part and $q(t)\rightarrow 0$ as
$t\rightarrow\infty$.  By Theorem \ref{thm38}, for any sequence $\alpha'=\{\alpha'_n\}\subset \mathbb{Z}$, there exists a subsequence $\alpha\subset
\alpha'$ such that $T_{\alpha'}f=g$ holds uniformly on
$\overline{q^\mathbb{Z}}\times S$ and $T_\alpha p=\psi$  holds uniformly on $\overline{q^\mathbb{Z}}$.
Therefore, $T_\alpha\phi=T_\alpha p=\psi$ is an almost periodic
solution of the corresponding equation in the hull
\[D_qx(t)=g(t,x)\]
on $\mathbb{T}$.
 Now that $T_{-\alpha}g=f$ holds uniformly on $\overline{q^\mathbb{Z}}\times S$ and
$T_{-\alpha}\psi=p$ holds uniformly on $\overline{q^\mathbb{Z}}$, hence $p$ is an almost
periodic solution of \eqref{e41}. The proof is complete.
\end{proof}

\begin{theorem}
If for each $g\in H(f)$, the equation $D_qx (t)=g(t,x(t))$ in the hull has
a solution in $S$, where $S\subset \mathbb{E}^n$ is a compact set, then these solutions are almost periodic.
In particular, system \eqref{e41} has an almost periodic solution in $S$.
\end{theorem}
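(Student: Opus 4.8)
The plan is to reduce the statement to showing that any single solution $\phi$ of \eqref{e41} which remains in the compact set $S$ is almost periodic, and then to apply this to $f$ itself (which belongs to $H(f)$) to obtain the final assertion. As the working criterion for almost periodicity I would use Theorem \ref{thm318}: it suffices to prove that for every pair of sequences $\alpha',\beta'\subset\mathbb{Z}$ one can extract common subsequences $\alpha\subset\alpha'$, $\beta\subset\beta'$ for which $T_{\alpha+\beta}\phi=T_\alpha T_\beta\phi$ holds uniformly on $\overline{q^{\mathbb{Z}}}$. The whole argument then becomes a matter of extracting subsequences and identifying the resulting limit functions as solutions of equations in the hull.

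First I would record how translates of a solution transform. If $\phi$ solves \eqref{e41}, then the translate $\phi(\cdot\,q^{\alpha_n})$ satisfies a shifted $q$-dynamic equation in which, because the $q$-derivative rescales under $t\mapsto tq^{\alpha_n}$ by the factor $q^{\alpha_n}$, the shifted right-hand side is exactly the one governed by the scaled translation operator of Definitions \ref{def34b} and \ref{def34by}; this is the quantum-specific point one must handle carefully, and it is precisely why that scaled operator is the natural one for dynamic equations. Using $f\in AP(\overline{q^{\mathbb{Z}}}\times D,\mathbb{X})$, Theorem \ref{thm34} lets me pass to a subsequence $\beta\subset\beta'$ along which $T_\beta f=g$ exists uniformly with $g\in H(f)\cap AP(\overline{q^{\mathbb{Z}}}\times D,\mathbb{X})$. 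Since every $\phi(\cdot\,q^{\beta_n})$ takes values in the compact set $S$, and since the time scale is discrete with graininess bounded below on $[q^{-N},\infty)\cap\overline{q^{\mathbb{Z}}}$ while continuity at $0$ is controlled as in Theorem \ref{thm33}, a diagonal extraction of Arzel\`a--Ascoli type produces a further subsequence along which $T_\beta\phi=\chi$ exists uniformly; passing to the limit in the shifted equation identifies $\chi$ as a solution of $D_qx=g(t,x)$ with $\chi(t)\in S$ for all $t$.

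Next I would iterate this construction along $\alpha'$ and combine it with Theorem \ref{thm318} applied to $f$. Choosing the subsequences coherently (as common subsequences in the sense introduced before Definition \ref{def36}), I can arrange simultaneously that $T_{\alpha+\beta}f=T_\alpha T_\beta f=:h\in H(f)$, that $T_\beta\phi=\chi$, $T_\alpha\chi$ and $T_{\alpha+\beta}\phi$ all exist uniformly, and that $\chi(t),(T_\alpha\chi)(t),(T_{\alpha+\beta}\phi)(t)\in S$ for all $t$. Passing to the limit in the respective shifted equations shows that both $T_\alpha T_\beta\phi=T_\alpha\chi$ and $T_{\alpha+\beta}\phi$ are solutions, lying in $S$, of the same hull equation $D_qx=h(t,x)$.

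The crux, and the step I expect to be the main obstacle, is to conclude from this that $T_{\alpha+\beta}\phi=T_\alpha T_\beta\phi$: the two limit functions must be forced to coincide. This is where the hypothesis has to be read as a separation property, namely that each hull equation possesses a single solution in $S$ (uniqueness of the solution in $S$). Granting that, the two solutions of $D_qx=h(t,x)$ in $S$ agree, so $T_{\alpha+\beta}\phi=T_\alpha T_\beta\phi$, and Theorem \ref{thm318} yields $\phi\in AP(\overline{q^{\mathbb{Z}}},\mathbb{X}^n)$. Applying this with $f\in H(f)$, its solution in $S$ is one such $\phi$ and is therefore almost periodic, which proves that \eqref{e41} has an almost periodic solution in $S$. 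Since without such a separation/uniqueness condition the competing limits need not match, I would regard establishing (or explicitly assuming) uniqueness of the solution in $S$ as the indispensable ingredient of the proof.
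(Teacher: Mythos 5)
Your proposal follows essentially the same route as the paper's own proof: both identify $T_{\alpha+\beta}\phi$ and $T_\alpha T_\beta\phi$ as solutions in $S$ of the same equation in the hull and then invoke Theorem \ref{thm318} to conclude almost periodicity. You are also right that the argument silently requires the hull equations to have a \emph{unique} solution in $S$ (the paper's proof asserts $T_{\alpha+\beta}\varphi=T_\alpha T_\beta\varphi$ with no other justification), and your observation about the $q^{\alpha_n}$ rescaling of the $q$-derivative under translation is a genuine subtlety that the paper glosses over.
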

\begin{proof}
Let $\phi$ be a solution of $D_qx (t)=g(t,x(t))$ in
$S$ with $g\in H(f)$. For any given sequences $\alpha'\subset\mathbb{Z}$ and $\beta'\subset\mathbb{Z}$, we can extract
common subsequences  $\alpha \subset \alpha'$ and $\beta\subset \beta'$ such that $T_{\alpha+\beta}f(t,x)=T_\alpha T_\beta f(t,x)$ holds uniformly on $\overline{q^\mathbb{Z}}\times S$, and $T_{\alpha+\beta}\varphi$ and $T_\alpha T_\beta \varphi$ exist  on any compact subset of $\overline{q^\mathbb{Z}}$.
Therefore, both $T_{\alpha+\beta}\varphi(t)$ and $T_\alpha T_\beta \varphi(t)$ are solutions of the same equation in the hull
\[
D_qx(t)=T_{\alpha+\beta}f(t,x)
\]
in $S$. Hence, $T_{\alpha+\beta}\varphi=T_\alpha T_\beta \varphi$, so, $\varphi$ is an almost periodic solution.
The proof is complete.
\end{proof}

For system \eqref{e41}, we
consider its product system:
\begin{equation*}\label{ass}
x^\Delta=f(t,x),\,\,\,\,\,y^\Delta=f(t,y).
\end{equation*}
\begin{theorem}
Suppose that there exists a Lyapunov function $V(t,x,y)\in
C(q^{\mathbb{Z}^+}\times D\times D, \mathbb{R})$ satisfying the following
conditions:
\begin{itemize}
  \item [$(i)$] $a(|x-y|)\leq V(t,x,y)\leq b(|x-y|)$, where $a,b\in\mathcal
  {K}$ with $\mathcal{K}=\{a\in C(\mathbb{R}^+, \mathbb{R}^+):a(0)=0$ and $a$ is
  increasing \};
  \item [$(ii)$] $|V(t,x_1,y_1)-v(t,x_2,y_2)|\leq
  L[|x_1-x_2|+|y_1-y_2]$, where $L>0$ is a constant;
  \item [$(iii)$] $D^+V^\Delta_{(4.3)}(t,x,y)\leq -cV(t,x,y)$
  where $-c\in\mathcal{R}^+$ and $c>0$.
\end{itemize}

Moreover, if there exists a solution $x(t)$ of (4.1) such that
$x(t)\in S$ for $t\in q^{\mathbb{Z}^+}$, where $S\subset D$ is a compact set.
Then there exists a unique uniformly asymptotically stable almost
periodic solution $p(t)$ of (4.1) in $S$. Furthermore, if $f(t,x)$
is periodic with period $\omega$ in $t$, then $p(t)$ is a periodic
solution of (4.1) with period $\omega$.
\end{theorem}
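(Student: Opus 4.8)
The plan is to combine a Lyapunov stability estimate with the characterization of asymptotically almost periodic solutions furnished by Theorem \ref{as}. First I would establish uniform asymptotic stability of solutions. Given two solutions $x(t),y(t)$ of \eqref{e41} lying in $S$, set $m(t)=V(t,x(t),y(t))$. Hypothesis $(iii)$ gives $D^+_q m^\Delta(t)\le -c\,m(t)$, so Lemma \ref{lmas} (applied with the regressive function $-c\in\mathcal{R}^+$ and forcing term $0$) yields $m(t)\le m(t_0)\,e_{-c}(t,t_0)$. Since $c>0$ and $-c\in\mathcal{R}^+$, we have $e_{-c}(t,t_0)\to 0$ as $t\to\infty$; together with $(i)$ this produces
\[
|x(t)-y(t)|\le a^{-1}\!\big(b(|x(t_0)-y(t_0)|)\,e_{-c}(t,t_0)\big),
\]
which is the uniform asymptotic stability estimate and will also deliver uniqueness at the end.

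Second, and this is the crux, I would show that the bounded solution $x(t)\in S$ is asymptotically almost periodic. Fix $\varepsilon>0$. Because $f$ is almost periodic there is a relatively dense set of translation numbers $q^\tau$; the decisive point is that, under the relevant (second-type) notion of almost periodicity, the dilate $y(t):=x(q^\tau t)$ satisfies $D_q y(t)=q^\tau f(q^\tau t,y(t))=f(t,y(t))+r(t)$ with $\|r\|<\delta'$, so $y$ is an \emph{approximate} solution of \eqref{e41}. Comparing $x$ and $y$ through $m(t)=V(t,x(t),y(t))$ and using the Lipschitz bound $(ii)$ to control the perturbation gives $D^+_q m^\Delta\le -c\,m+L\delta'$, whence Lemma \ref{lmas} produces $m(t)\le m(t_0)e_{-c}(t,t_0)+\frac{L\delta'}{c}$ (up to a graininess correction in the integral term). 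Hence $|x(q^\tau t)-x(t)|<\varepsilon$ for all large $t$, so every such $q^\tau$ is an asymptotic $\varepsilon$-translation number of $x$; this is exactly asymptotic almost periodicity in the sense of Definition \ref{zht1}. Writing $x=p+r$ with $p$ almost periodic, Theorem \ref{as} shows $p$ is an almost periodic solution of \eqref{e41} lying in $S$.

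Finally I would settle uniqueness, stability of $p$, and periodicity. If $p,\tilde p$ were two almost periodic solutions in $S$, their difference is almost periodic by Theorem \ref{thm313}$(a)$ and tends to $0$ as $t\to\infty$ by the stability estimate of the first step; since an almost periodic function vanishing at infinity is identically zero, $p\equiv\tilde p$. The same estimate shows $p$ is uniformly asymptotically stable. If $f$ is periodic of period $\omega$ in $t$, then $p(q^\omega\cdot)$ is again an almost periodic solution in $S$, so by uniqueness $p(q^\omega t)=p(t)$, i.e. $p$ has period $\omega$.

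The main obstacle is the second step: turning the dilation $x(q^\tau\cdot)$ into an honest approximate solution. This is precisely where the $q$-scaling of the $\Delta$-derivative (the factor $q^\tau$ appearing in $D_q\,x(q^\tau\cdot)$) forces the use of the second notion of almost periodicity, and it is the place where the perturbed Lyapunov inequality must be pushed through Lemma \ref{lmas} with uniform control of the forcing term $L\delta'$ as $t\to\infty$.
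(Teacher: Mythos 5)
Your proposal is correct in outline and rests on the same two pillars as the paper's proof: a Lyapunov/Gronwall estimate through Lemma \ref{lmas}, and Theorem \ref{as} to extract the almost periodic part of an asymptotically almost periodic solution; the uniqueness, stability and periodicity endgame is identical. Where you differ is in how asymptotic almost periodicity of the bounded solution is established. The paper fixes a sequence $\alpha=\{\alpha_n\}\subset\mathbb{Z}$ with $\alpha_n\to\infty$ and $T_\alpha f=f$, and runs the Lyapunov comparison on the \emph{pair} of translates $\phi(tq^{\alpha_k})$, $\phi(tq^{\alpha_m})$ to show that $\{\phi(tq^{\alpha_n})\}$ is uniformly Cauchy on $q^{\mathbb{Z}^+}$ (choosing $k_0$ so that $b(2B)e_{-c}(\alpha_k,0)<a(\e)/2$ and $|f(tq^{\alpha_k},x)-f(tq^{\alpha_m},x)|<ca(\e)/(2L)$), i.e.\ it uses a sequential characterization of asymptotic almost periodicity. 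You instead compare $x(t)$ with a single dilate $x(q^\tau t)$ for an $\e$-translation number $q^\tau$ of $f$, viewing the dilate as an approximate solution and pushing the perturbation $L\delta'$ through the same Gronwall inequality; this is a translation-number characterization of the same property. The two realizations are of comparable difficulty and neither the paper nor you proves the characterization being invoked (Definition \ref{zht1} asks for the decomposition $x=p+r$, which follows from either criterion only via a classical Fr\'echet-type argument), so you are at the same level of rigor as the source. One point in your favour: you explicitly observe that $D_q$ applied to $x(q^\tau\cdot)$ produces the factor $q^\tau$, so that the dilate solves $D_qy=q^\tau f(q^\tau t,y)$ and the second-type definition (Definition \ref{def34b}/\ref{def34by}) is the one that makes the perturbation small; the paper silently asserts that $\phi(tq^{\alpha_n})$ solves $D_qx=f(tq^{\alpha_n},x)$, which elides exactly this scaling.
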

\begin{proof}
Let $\alpha=\{\alpha_n\}\subset\mathbb{Z}$ be a sequence such that
$\a_n\rightarrow\infty$ as $n\rightarrow\infty$ and $T_\alpha
f=f$ uniformly on $\overline{q^\mathbb{Z}}\times S$. Assume that
$\phi(t)\subset S$ is a solution of \eqref{e41} for $t\in q^{\mathbb{Z}^+}$. Then
$\phi(tq^{\a_n})$ is a solution of the equation
\[D_qx=f(tq^{\a_n},x),\]
which is also in $S$. For a given $\e>0$, choose an integer
$k_0(\e)$ so large that if $m\geq k>k_0(\e)$, we have
\begin{equation}\label{as1}
b(2B)e_{-c}(\a_k,0)<a(\e)/2
\end{equation}
and
\begin{equation}\label{as2}
|f(tq^{\a_k},x)-f(tq^{\a_m},x)|<\frac{ca(\e)}{2L},
\end{equation}
where $B$ is a positive constant such that $S\subset\{x:|x|\leq
B\}$. Then it follows from $(ii)$ and $(iii)$ that
\begin{eqnarray*}
D^+V^\Delta (t,\phi(t), \phi(tq^{-\alpha_m-\alpha_k}))&\leq&
-cV(t,\phi(t),\phi(tq^{-\alpha_m-\alpha_k}))\\
&&+L|f(tq^{\alpha_m-\alpha_k},\phi(tq^{\alpha_m-\alpha_k)})-f(t,\phi(tq^{\alpha_m-\alpha_k})))|.
\end{eqnarray*}
In view of \eqref{as2}, we have
\begin{equation}\label{as3}
D^+V^\Delta (t,\phi(t), \phi(tq^{-\alpha_m-\alpha_k}))\leq
-cV(t,\phi(t),\phi(tq^{-\alpha_m-\alpha_k}))+\frac{ca(\e)}{2},
\end{equation}
 If $m\geq
k\geq k_0(\e)$, by Lemma \ref{lmas}, condition (i) and \eqref{as1}, \eqref{as3} implies
that
\begin{eqnarray*}
V (tq^{\alpha_k},\phi(tq^{\alpha_k}), \phi(tq^{-\alpha_m}))&\leq&
e_{-c}(tq^{\alpha_m},0)V(0,\phi(0),\phi(q^{\alpha_m-\alpha_k}))\\
&&+\frac{a(\e)}{2}(1-e_{-c}(tq^{\alpha_k},0))\\
&\leq&
e_{-c}(tq^{\a_m},0)V(0,\phi(0),\phi(q^{\alpha_m-\alpha_k}))+\frac{a(\e)}{2}\\
&<&a(\e)\,\,,\,\,\,\,{\rm for}\,\,t\in q^{\mathbb{Z}^+}.
\end{eqnarray*}
Therefore, by condition $(i)$, for $m\geq k\geq k_0$ and $ t\in q^{\mathbb{Z}^+}$, we have
\[
|\phi(tq^{\alpha_k})-\phi(tq^{\alpha_m})|<\varepsilon,
\]
which implies that $\phi$ is asymptotically almost periodic. Hence, it follows from Theorem \ref{as}  that the almost periodic part $p(t)\subset S$ of $\phi$
is an almost periodic solution of system \eqref{e41}.

By using the Lyapunov function $V(t,x,y)$,  it is easy to show that $p(t)$ is uniformly
asymptotically stable and every solution remaining in $D$ approaches
$p(t)$ as $t\rightarrow\infty$, which also implies the uniqueness of
$p(t)$.

In particular, if $f(t,x)$ is periodic in $t$ with period $\omega$, then
$p(tq^\omega)$ is also a solution of \eqref{e41} which remains in $S$. By the
uniqueness of the solutions, we know that $p(tq^\omega)=p(t)$. This
completes the proof.
\end{proof}

\section{An equivalent definition of almost periodic functions on the quantum time scale}
\setcounter{equation}{0}

\indent

In this section, we will give an equivalent definition of almost periodic functions on the quantum time scale $\overline{q^\mathbb{Z}}$.
To this end,
we introduce a notation $-\infty_q$ and  stipulate $q^{-\infty_q}=0, t\pm(-\infty_q)=t$ and $t>-\infty_q$ for any $t\in \mathbb{Z}$.
Let $f\in C(\overline{q^\mathbb{Z}},\mathbb{X})$, we define a function $\tilde{f}:  \mathbb{Z}\cup \{-\infty_q\}\rightarrow \mathbb{X}$ by
\begin{eqnarray}\label{g1}
\tilde{f}(t)=\left\{\begin{array}{lll}
f(q^t),& t\in \mathbb{Z},\\
f(0),& t=-\infty_q,\end{array}\right.
\end{eqnarray}
that is,
\begin{eqnarray*}
f(t)=\left\{\begin{array}{lll}
\tilde{f}(\log_q t),& t\in q^\mathbb{Z},\\
\lim\limits_{t\rightarrow 0^+}f(t),& t=0.\end{array}\right.
\end{eqnarray*}
 Since $f(t)$ is right continuous at $t=0$, it is clear that the above definition is well defined.

Moreover, for $f\in C(\overline{q^\mathbb{Z}}\times \mathbb{X},\mathbb{X})$, we define a function $\tilde{f}:  \mathbb{Z}\cup \{-\infty_q\}\times \mathbb{X}\rightarrow \mathbb{X}$ by
\begin{eqnarray}\label{g2}
\tilde{f}(t,x)=\left\{\begin{array}{lll}
f(q^t,x),& (t,x)\in \mathbb{Z}\times \mathbb{X},\\
f(0,x),& t=-\infty_q, x\in \mathbb{X},\end{array}\right.
\end{eqnarray}
that is,
\begin{eqnarray*}
f(t,x)=\left\{\begin{array}{lll}
\tilde{f}(\log_q t,x),& (t,x)\in q^\mathbb{Z}\times \mathbb{X},\\
\lim\limits_{t\rightarrow 0}f(t,x),& t=0, x\in \mathbb{X}.\end{array}\right.
\end{eqnarray*}
Since $f(t,x)$ is continuous at $(0,x)$, it is clear that the above definition is well defined.

\begin{definition}\label{feq2}
A  function $f\in C( (\mathbb{Z}\cup\{-\infty_q\})\times \mathbb{X}, \mathbb{X})$ is
called almost periodic in $t\in \mathbb{Z}\cup\{-\infty_q\}$ uniformly in $x\in \mathbb{X}$ if for every $\varepsilon>0$ and each compact subset $S$ of $\mathbb{X}$
there exists an  integer $l(\varepsilon)>0$
such that each interval of
 length $l(\varepsilon,S)$ contains an integer $\tau$ such that
\begin{equation}\label{eq2b}
|f(t+\tau,x)-f(t,x)|<\varepsilon, \,\, \forall (t,x) \in  (\mathbb{Z}\cup\{-\infty_q\})\times S.
\end{equation}
This $\tau$ is called the $\varepsilon$-translation number or $\varepsilon$-almost period.
\end{definition}

\begin{definition}\label{feq1}
A function $ f\in C(\mathbb{Z}\cup\{-\infty_q\},\mathbb{X})$ is called almost periodic
if for every  $\varepsilon>0$,
 there exists an  integer $l(\varepsilon)>0$
such that each interval of
 length $l(\varepsilon)$ contains an integer $\tau$ such that
\begin{equation}\label{eq1a}
|f(t+\tau)-f(t)|<\varepsilon, \,\, \forall t \in  \mathbb{Z}\cup\{-\infty_q\}.
\end{equation}
This $\tau$ is called the $\varepsilon$-translation number or $\varepsilon$-almost period.
\end{definition}

\begin{remark}\label{re31a}
According to the operation rule about $-\infty_q$, $\mathbb{Z}\cup\{-\infty_q\}$ is an almost periodic time scale under the Definition \ref{li67} and when $t=-\infty_q$, the inequalities \eqref{eq2b} and \eqref{eq1a} hold naturally. Hence, $\forall t \in  \mathbb{Z}\cup\{-\infty_q\}$ and
$\forall (t,x) \in  (\mathbb{Z}\cup\{-\infty_q\})\times S$ in \eqref{eq2b} and \eqref{eq1a} can be replaced by $\forall t \in  \mathbb{Z}$ and
$\forall (t,x) \in  \mathbb{Z}\times S$, respectively.
\end{remark}

\begin{remark}\label{re31}
Obviously, the almost periodic functions defined by Definitions \ref{feq2} and \ref{feq1} (which are defined on $\mathbb{Z}\cup\{-\infty_q\}\times \mathbb{X}$) or $\mathbb{Z}\cup\{-\infty_q\}$ have the same properties as  the ordinary almost periodic functions  defined on $\mathbb{Z}\times \mathbb{X}$ or $\mathbb{Z}$, respectively.
\end{remark}

\begin{definition}\label{bc2}
A  function   $f\in C(\overline{q^\mathbb{Z}}\times \mathbb{X},\mathbb{X})$ is
called almost   periodic in  $t \in \overline{q^\mathbb{Z}}$ for each $x \in \mathbb{X}$ if and only if  the function $\tilde{f}(t,x)$ defined by \eqref{g2} is almost   periodic in  $t \in \overline{q^\mathbb{Z}}$ for each $x \in \mathbb{X}$.
\end{definition}

\begin{definition}\label{bc1}
A  function   $f\in C(\overline{q^\mathbb{Z}},\mathbb{X})$ is
called almost   periodic if and only if  the function $\tilde{f}(t)$ defined by \eqref{g1} is almost periodic.
\end{definition}

Obviously, Definitions \ref{bc2} and \ref{bc1} are equivalent to Definitions \ref{def34} and \ref{def34y}, respectively.
Moreover, by Remark \ref{re31}, all of the properties of almost periodic functions on the quantum time scale can be directly obtained from the corresponding properties of the ordinary almost periodic functions defined on $\mathbb{Z}$ or $\mathbb{Z}\times \mathbb{X}$.

\section{An application}
\setcounter{equation}{0}\indent

Based on the equivalent definition of almost periodic functions on the quantum time scale, Remark \ref{re31a}, the transformations \eqref{g1} and \eqref{g2},
we can transform the almost periodic problem of dynamic equations on the quantum time scale into the almost periodic problem of dynamic equations on almost periodic time scales.
For example, consider the dynamic equation on the  quantum time scale:
\begin{equation}\label{qqn1}
D_qx  (t)=f(t,x(t),x(tq^{-\sigma(t)}))),\,\, t\in \overline{q^\mathbb{Z}},
\end{equation}
where $\sigma: \mathbb{Z} \rightarrow \mathbb{Z}$ is a scalar delay function,  $f\in C(\overline{q^\mathbb{T}}\times \mathbb{E}^{2n},\mathbb{E}^n)$.
 Under the transformation \eqref{g2}, equation \eqref{qqn1} is transformed to
\begin{equation}\label{qqn2}
\Delta \tilde{x}  (n)=F(n,\tilde{x}(n),\tilde{x}(n-\tilde{\sigma}(n))),\,\,n\in \mathbb{Z}\cup\{-\infty_q\},
\end{equation}
and vice visa, where $F(n)=(q-1)q^n\tilde{f}(n,\tilde{x}(n),\tilde{x}(n-\tilde{\sigma}(n)))).$

Clearly, if $x(t)$ is a solution of \eqref{qqn1} if and only if  $\tilde{x}(n)$ is a solution of \eqref{qqn2}.
Therefore, applying the theory of difference equations or the theory of almost periodic dynamic equations on time scales (\cite{bing5}), we can first discuss the  almost periodic problem of \eqref{qqn2}, and then by  transformation \eqref{g2} we  can obtain the corresponding result about \eqref{qqn1}. Besides, since when $n=\{-\infty\}_q$, both sides of \eqref{qqn2} are equal to $0$, so, if $\phi(n)$ is a solution of the following equation:
\begin{equation*}\label{qqn3}
\Delta \tilde{x}  (n)=F(n,\tilde{x}(n),\tilde{x}(n-\tilde{\sigma}(n))),\,\,n\in \mathbb{Z},
\end{equation*}
then
\begin{equation*}
\varphi(n)=\left\{\begin{array}{lll}\phi(n),&n\in \mathbb{Z},\\
\phi(0),&n=\{-\infty\}_q\end{array}\right.
\end{equation*}is a solution of \eqref{qqn2}.

As an application, we consider the
following  high-order Hopfield neural network on  the  quantum time scale $\overline{q^\mathbb{T}}$:
\begin{eqnarray}\label{qqe11}
 D_q x_i(t)&=&-c_i(t)x_i(t)+
\sum^m_{j=1}a_{ij}(t)f_j(x_j(tq^{-\gamma_{ij}(t)}))\nonumber\\
&&+\sum^m_{j=1}\sum_{l=1}^{m}b_{ijl}(t)
g_j(x_j(tq^{-\omega_{ijl}(t)}))g_l(x_l(tq^{-v_{ijl}(t)}))+I_i(t),\,\,t\in \overline{q^\mathbb{T}},
\end{eqnarray}
where $i=1,2,\ldots,m$, $m$ corresponds to
the number of units in a neural network, $x_i(t)$ corresponds to the
state vector of the $i$th unit at the time $t$, $c_i(t)$ represents
the rate with which the $i$th unit will reset its potential to the
resting state in isolation when disconnected from the network and
external inputs at time $t$, $a_{ij}(t),\alpha_{ij}(t)$ and
$b_{ijl}(t)$ are the first- and second-order connection weights of
the neural network at time $t$, $I_i(t)$ denotes the external inputs at time $t$,
$f_j$ and $g_j$ are the activation functions of signal
transmission, $\gamma_{ij}(t),\omega_{ijl}(t),v_{ijl}(t): \mathbb{Z}\rightarrow \mathbb{Z}^+$
correspond to the transmission delays at time $t$.

 Under the transformation \eqref{g2}, equation \eqref{qqe11} is transformed to
\begin{eqnarray}\label{qqe112}
 \Delta \tilde{x}_i(n)&=&-(q-1)q^n\tilde{c}_i(n)\tilde{x}_i(n)+
(q-1)q^n\sum^m_{j=1}\tilde{a}_{ij}(n)\tilde{f}_j(\tilde{x}_j(n-\tilde{\gamma}_{ij}(n)))\nonumber\\
&&+(q-1)q^n\sum^m_{j=1}\sum_{l=1}^{m}\tilde{b}_{ijl}(n)
\tilde{g}_j(\tilde{x}_j(n-\tilde{\omega}_{ijl}(n)))\tilde{g}_l(\tilde{x}_l(n-\tilde{v}_{ijl}(n)))\nonumber\\
&&+(q-1)q^n\tilde{I}_i(n),\,\,n\in \mathbb{Z}\cup\{-\infty_q\},\,i=1,2,\ldots,m
\end{eqnarray}
and vice visa. If we take $\mathbb{T}=\mathbb{Z}\cup\{-\infty_q\}$, then \eqref{qqe112} can be written as
\begin{eqnarray}\label{qqe1121}
 \tilde{x}^\Delta_i(t)&=&-\hat{c}_i(t)\tilde{x}_i(t)+
\sum^m_{j=1}\hat{a}_{ij}(t)\tilde{f}_j(\tilde{x}_j(t-\tilde{\gamma}_{ij}(t)))\nonumber\\
&&+\sum^m_{j=1}\sum_{l=1}^{m}\hat{b}_{ijl}(t)
\tilde{g}_j(\tilde{x}_j(t-\tilde{\omega}_{ijl}(t)))\tilde{g}_l(\tilde{x}_l(t-\tilde{v}_{ijl}(t)))\nonumber\\
&&+ \hat{I}_i(t),\,\,t\in \mathbb{T},\,i=1,2,\ldots,m,
\end{eqnarray}
where $c_i(t)=(q-1)q^t\tilde{c}_i(t),\hat{a}_{ij}(t)=(q-1)q^t\tilde{a}_{ij}(t),\hat{b}_{ijl}(t)=(q-1)q^t\tilde{b}_{ijl}(t), \hat{I}_i(t)=(q-1)q^t\tilde{I}_i(t),t\in \mathbb{T},\,i,j,l=1,2,\ldots,m.$
From the discussion above, we see that if \eqref{qqe1121} has an almost periodic solution $\tilde{x}=(\tilde{x}_1,\tilde{x}_2,\ldots,\tilde{x}_m)^T$, then \eqref{qqe11} has an almost periodic solution $x=(x_,x_2,\ldots,x_m)^T$.

Now, we shall establish the existence of almost periodic solutions of \eqref{qqe1121}.
First, for convenience, we introduce some notations. We will use
$x=(x_1,x_2,\ldots,x_n)^T\in\mathbb{R}^n$ to denote a column vector,
in which the symbol $T$ denotes the transpose of vector. We let
$|x|$ denote the absolute-value vector given by
$|x|=(|x_1|,|x_2|,\ldots,|x_n|)^T$, and define
$\|x\|=\max\limits_{1\leq i\leq n}|x_i|$.

Let $AP(\mathbb{T})=\{x\in C(\mathbb{T},\mathbb{R}):x(t)$ is a
real valued, almost periodic function on $\mathbb{T}\}$ with the norm
$\|\varphi\|_0=\sup\limits_{t\in\mathbb{T}}\|\varphi(t)\|_0$,
where
$\|\varphi(t)\|_0=\max\limits_{1\leq i\leq
n}|\varphi_i(t)|$
and $\varphi \in AP(\mathbb{T})$, then
$AP(\mathbb{T})$ is an Banach space.

\begin{theorem}\label{th31} Assume that
\begin{itemize}
\item [$(H_1)$] $\tilde{f}_j,\tilde{g}_j\in C(\mathbb{R},\mathbb{R})$, and there
    exist positive constants
    $\epsilon_j$ and  $\varepsilon_j$ such that
    \[
      |\tilde{f}_j(u)-\tilde{f}_j(v)|\leq\epsilon_j|u-v|,\,|\tilde{g}_j(u)-\tilde{g}_j(v)|\leq
      \varepsilon_j|u-v|,
    \]
    where $|u|,|v|\in\mathbb{R}, j=1,2,\ldots,m$;
    \item [$(H_2)$]  there exist constants $N_j>0$ such that
    $|\tilde{g}_j(u)|\leq N_j, u\in\mathbb{R}, j=1,2,\ldots,m$;
\item [$(H_3)$] for $i,j,l=1,2,\ldots,m$, $\hat{c}_i$ is positive regressive and $\inf_{t\in\mathbb{T}}
\hat{c}_i(t)>0, \hat{c}_i,\hat{a}_{ij}, \hat{b}_{ijl},\hat{I}_i,\\
\tilde{\gamma}_{ij},\tilde{\omega}_{ijl},\tilde{v}_{ijl}\in
AP(\mathbb{T})$;
\item [$(H_4)$] there exists a constant $r_0$ such that
\[
\max_{1\leq i\leq
m}\bigg\{\frac{\eta_i}{\hat{c}^-_i}\bigg\}+L\leq r_0,\,\,
\max\limits_{1\leq i\leq
m}\{\bar{\eta}_i\} <\min\limits_{1\leq i\leq m}\{
\hat{c}^-_i\},
\]
 where, for
$i,j,l=1,2,\ldots,n$,
\[
\eta_i=\sum^m_{j=1}\bigg[\hat{a}^+_{ij}\big(|f_j(0)|+\epsilon_{j}r_0\big) \big)
+\big(|g_j(0)|+\varepsilon_{j}r_0\big)\sum^m_{l=1}\hat{b}^+_{ijl}(|g_l(0)|+\varepsilon_{l}r_0\big)\bigg],
\]
\[
\bar{\eta}_i=\sum^m_{j=1}\hat{a}^+_{ij}\epsilon_j+\sum^m_{j=1}\sum^m_{l=1}
\hat{b}^+_{ijl}(N_l\varepsilon_j+N_j\varepsilon_l),\,\,
L=\max_{1\leq i\leq
n}\frac{\overline{I_i}}{\hat{c}^-_i},\,\, \hat{c}^+_i=\sup_{t\in\mathbb{T}}c_i(t),
\]
\[
\hat{c}^-_i=\inf_{t\in\mathbb{T}}
\hat{c}_i(t),\,\, \hat{a}^+_{ij}=\sup_{t\in\mathbb{T}}|\hat{a}_{ij}(t)|,\,\,  \hat{b}^+_{ijl}
=\sup_{t\in\mathbb{T}}|\hat{b}_{ijl}(t)|,\,\, \hat{I}^+_i=\sup_{t\in\mathbb{T}}|\hat{I}_i(t)|.
\]
\end{itemize}
Then system \eqref{qqe1121} has a unique almost periodic solution in the
region
\[
E=\{\varphi\in AP(\mathbb{T}):\|\varphi\|_0\leq r_0\}.
\]
\end{theorem}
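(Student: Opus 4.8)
The plan is to recast \eqref{qqe1121} as a fixed-point problem on the Banach space $AP(\mathbb{T})$ and to apply the contraction mapping principle on the closed ball $E$. Since $(H_3)$ guarantees that each $\hat c_i$ is positive regressive with $\hat c_i^-=\inf_{t\in\mathbb{T}}\hat c_i(t)>0$, the scalar linear part $\tilde x_i^\Delta=-\hat c_i(t)\tilde x_i$ admits an exponential dichotomy, so every bounded solution of the $i$th equation in \eqref{qqe1121} can be written through the variation-of-constants representation
\[
(\Phi\varphi)_i(t)=\int_{-\infty}^{t} e_{-\hat c_i}\big(t,\sigma(s)\big)\,\Gamma_i(s,\varphi)\,\Delta s,
\]
where $\Gamma_i(s,\varphi)$ collects the nonlinear forcing terms on the right-hand side of the $i$th equation in \eqref{qqe1121}. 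A fixed point of $\Phi=(\Phi_1,\dots,\Phi_m)$ is then exactly an almost periodic solution of \eqref{qqe1121}, so it suffices to show that $\Phi$ is a contraction of $E$ into itself.

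First I would check that $\Phi$ is well defined and leaves $AP(\mathbb{T})$ invariant. Because $\mathbb{T}=\mathbb{Z}\cup\{-\infty_q\}$ is an almost periodic time scale (Remark \ref{re31a}) and, by Remark \ref{re31}, the whole calculus of almost periodic functions transfers verbatim, the composition $\tilde f_j(\varphi_j(\cdot-\tilde\gamma_{ij}(\cdot)))$ is almost periodic by the Lipschitz continuity in $(H_1)$ together with the composition results (Theorems \ref{thm36} and \ref{thm317}); finite sums and products remain almost periodic by Theorem \ref{thm313}$(a)$; hence $\Gamma_i(\cdot,\varphi)\in AP(\mathbb{T})$, and the exponential-dichotomy integral of an almost periodic forcing term is again almost periodic. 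Using the standard estimate $\int_{-\infty}^{t}e_{-\hat c_i}(t,\sigma(s))\Delta s\le 1/\hat c_i^-$ and the bounds $|\tilde f_j(u)|\le |f_j(0)|+\epsilon_j r_0$, $|\tilde g_j(u)|\le |g_j(0)|+\varepsilon_j r_0$ valid for $|u|\le r_0$, the forcing term is dominated by $\eta_i+\hat I_i^+$, so for $\varphi\in E$ I obtain $\|\Phi\varphi\|_0\le \max_i(\eta_i/\hat c_i^-)+L\le r_0$ by the first inequality in $(H_4)$; thus $\Phi(E)\subset E$.

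Next I would verify the contraction estimate. For $\varphi,\psi\in E$ the Lipschitz bounds in $(H_1)$ control each first-order term by $\hat a_{ij}^+\epsilon_j\|\varphi-\psi\|_0$, while for the quadratic terms I would use the factorization $g_j(u)g_l(v)-g_j(\bar u)g_l(\bar v)=g_j(u)[g_l(v)-g_l(\bar v)]+g_l(\bar v)[g_j(u)-g_j(\bar u)]$ together with the global bound $(H_2)$ to dominate the difference by $\hat b_{ijl}^+(N_j\varepsilon_l+N_l\varepsilon_j)\|\varphi-\psi\|_0$. Summing over $j,l$ and integrating against $e_{-\hat c_i}$ gives $\|\Phi\varphi-\Phi\psi\|_0\le \max_i(\bar\eta_i/\hat c_i^-)\|\varphi-\psi\|_0$, and the second inequality in $(H_4)$, namely $\max_i\bar\eta_i<\min_i\hat c_i^-$, forces the Lipschitz constant $\max_i(\bar\eta_i/\hat c_i^-)<1$. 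Since $E$ is a closed subset of the Banach space $AP(\mathbb{T})$, the Banach fixed point theorem then yields a unique $\varphi^*\in E$ with $\Phi\varphi^*=\varphi^*$, which is the desired unique almost periodic solution of \eqref{qqe1121}.

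The main obstacle I expect is not the two estimates, which become routine once the correct splittings are in place, but justifying the integral representation itself: one must confirm that the exponential dichotomy of the diagonal linear part on the time scale $\mathbb{Z}\cup\{-\infty_q\}$ produces a convergent $\int_{-\infty}^{t}$ and that this integral operator preserves almost periodicity. This is where the positive regressivity of $(H_3)$, the exponential-decay estimate for $e_{-\hat c_i}$, and Remark \ref{re31} (which imports the sequence theory of almost periodic functions onto $\mathbb{T}$) must be combined carefully; everything downstream is then a direct application of $(H_1)$--$(H_4)$.
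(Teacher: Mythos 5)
Your proposal is correct and follows essentially the same route as the paper: exponential dichotomy of the diagonal linear part yields the integral representation $\int_{-\infty}^{t}e_{-\hat c_i}(t,\sigma(s))\,\Gamma_i(s,\varphi)\,\Delta s$, the operator $\Phi$ maps $E$ into itself by the first inequality in $(H_4)$, and the product-splitting of the $\tilde g_j\tilde g_l$ terms with $(H_2)$ gives the contraction constant controlled by the second inequality in $(H_4)$, after which the Banach fixed point theorem finishes the argument. Your remaining concern about the convergence of the improper integral and preservation of almost periodicity is exactly what the paper delegates to Lemma 2.15 of the cited reference on exponential dichotomy and Theorem 4.19 of the almost-periodic dynamic equations reference, so nothing further is needed.
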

\begin{proof}
For any given $\varphi\in AP(\mathbb{T})$, we consider the following
almost dynamic equation
\begin{eqnarray}\label{ze31}
\tilde{x}_i^\Delta(t)&=&-\hat{c}_i(t)\tilde{x}_i(t)+\sum^m_{j=1}\hat{a}_{ij}(t)\tilde{f}_j(\varphi_j(t-\tilde{\gamma}_{ij}(t)))\nonumber\\
&&+\sum^m_{j=1}\sum^m_{l=1}
\hat{b}_{ijl}(t)\tilde{g}_j(\varphi_j(t-\tilde{\omega}_{ijl}(t)))\tilde{g}_l(\varphi_l(t-\tilde{v}_{ijl}(t)))+\hat{I}_i(t),\,\, i=1,2,\ldots,m.\quad
\end{eqnarray}
In view of $(H_3)$,
it follows from Lemma 2.15 in \cite{ddns} that the linear system
\[
\tilde{x}_i^{\Delta}(t)=-\hat{c}_i(t)\tilde{x}_i(t),\,\, i=1,2,\ldots,m
\]
admits an exponential dichotomy on $\mathbb{T}$. Thus, by Theorem 4.19 in \cite{bing5},
we know that system \eqref{ze31} has exactly one almost periodic solution
\begin{eqnarray*}
(\tilde{x}_{\varphi})_i(t)&=&\int^t_{-\infty}e_{-\hat{c}_i}(t,\sigma(s))\bigg(\sum^m_{j=1}\bigg[\hat{a}_{ij}(s)
\tilde{f}_j(\varphi_j(s-\tilde{\gamma}_{ij}(s)))\\
&&+\sum^m_{j=1}\sum^m_{l=1}\hat{b}_{ijl}(s)\tilde{g}_j(\varphi_j(s-\tilde{\omega}_{ijl}(s)))\tilde{g}_l(\varphi_l(s-\tilde{v}_{ijl}(s)))+\hat{I}_i(s)
\bigg]\Delta s,\,\,i=1,2,\ldots,m.
\end{eqnarray*}
First, we define a nonlinear operator on $AP(\mathbb{T})$ by
\[
\Phi(\varphi)(t)=\tilde{x}_{\varphi}(t),\,\,\,\forall \varphi\in AP(\mathbb{T}).
\]
Next, we check that $\Phi(E)\subset E$. For any given $\varphi\in
E$, it suffices to prove that $\|\Phi(\varphi)\|_0\leq
r_0$. By conditions $(H_1)$-$(H_4)$, we have
\begin{eqnarray*}
\|\Phi(\varphi)\|_0&=&\sup_{t\in\mathbb{T}}\max_{1\leq i\leq
m}\bigg\{\bigg|\int^t_{-\infty}e_{-\hat{c}_i}
(t,\sigma(s))\bigg(\sum^m_{j=1}\hat{a}_{ij}(s)\tilde{f}_j(\varphi_j(s-\tilde{\gamma}_{ij}(s)))\nonumber\\
&&+\sum^m_{j=1}\sum^m_{l=1}\hat{b}_{ijl}(s)\tilde{g}_j(\varphi_j(s-\tilde{\omega}_{ijl}(s)))\tilde{g}_l(\varphi_l(s-\tilde{v}_{ijl}(s)))
+\hat{I}_i(s)\bigg)\Delta s\bigg|\bigg\}\nonumber\\
&\leq&\sup_{t\in\mathbb{T}}\max_{1\leq i\leq
m}\bigg\{\bigg|\int^t_{-\infty}e_{-\hat{c}^-_i}(t,\sigma(s))\bigg(
\sum^m_{j=1}\hat{a}^+_{ij}\tilde{f}_j(\varphi_j(s-\tilde{\gamma}_{ij}(s)))\nonumber\\
&&+\sum^m_{j=1}\sum^m_{l=1}\hat{b}^+_{ijl}\tilde{g}_j(\varphi_j(s-\tilde{\omega}_{ijl}(s)))\tilde{g}_l(\varphi_l(s-\tilde{v}_{ijl}(s)))\bigg)\Delta
s\bigg|\bigg\}+\max_{1\leq i\leq n}\frac{\hat{I}^+_i}{\hat{c}^-_i}\nonumber\\
&\leq&\sup_{t\in\mathbb{T}}\max_{1\leq i\leq
m}\bigg\{\bigg|\int^t_{-\infty}e_{-\hat{c}^-_i}(t,\sigma(s))\bigg(\sum^m_{j=1}
\hat{a}^+_{ij}\big(|\tilde{f}_j(0)|+\epsilon_j|\varphi_j(s-\tilde{\gamma}_{ij}(s))|\big)\nonumber\\
&&+\sum^m_{j=1}\sum^m_{l=1}\hat{b}^+_{ijl}\big(|\tilde{g}_j(0)|+\varepsilon_j|\varphi_j(s-\tilde{\omega}_{ijl}(s))|\big)
\big(|\tilde{g}_l(0)|+\varepsilon_l|\varphi_l(s-\tilde{v}_{ijl}(s))|\big)\bigg)\Delta
s\bigg|\bigg\}+L\nonumber\\
&\leq&\sup_{t\in\mathbb{T}}\max_{1\leq i\leq
m}\bigg\{\bigg|\int^t_{-\infty}e_{-\hat{c}^-_i}(t,\sigma(s))\bigg(\sum^m_{j=1}
\hat{a}^+_{ij}\big(|\tilde{f}_j(0)|+\epsilon_{j}r_0\big)\nonumber\\
&&+\sum^m_{j=1}\sum^m_{l=1}\hat{b}^+_{ijl}
\big(|\tilde{g}_j(0)|+\varepsilon_{j}r_0\big)
\big(|\tilde{g}_l(0)+\varepsilon_{l}r_0\big)\bigg)\Delta s\bigg|\bigg\}+L\nonumber\\
&\leq&\max_{1\leq i\leq
m}\bigg\{\frac{\eta_i}{\hat{c}^-_i}\bigg\}+L\leq r_0,
\end{eqnarray*}
which implies that $\Phi(E)\subset E$.
For any $\varphi,\psi\in E$, according to $(H_1)$ and
$(H_4)$, we have
\begin{eqnarray*}
&&\|\Phi(\varphi)-\Phi(\psi)\|_0\nonumber\\
&=&\sup_{t\in\mathbb{T}}\max_{1\leq i\leq
m}\bigg\{\bigg|\int^t_{-\infty}e_{-\hat{c}_i}(t,\sigma(s))\bigg(\sum^m_{j=1}
\hat{a}_{ij}(s)\big[\tilde{f}_j(\varphi_j(s-\tilde{\gamma}_{ij}(s)))-\tilde{f}_j(\psi_j(s-\tilde{\gamma}_{ij}(s)))\big]\nonumber\\
&&+\sum^m_{j=1}\sum^m_{l=1}\hat{b}_{ijl}(s)\tilde{g}_j(\varphi_j(s-\tilde{\omega}_{ijl}(s)))
\tilde{g}_l(\varphi_l(s-\tilde{v}_{ijl}(s)))\nonumber\\
&&-\sum^m_{j=1}b_{ijl}(s)\tilde{g}_j(\psi_j(s-\tilde{\omega}_{ijl}(s)))
\tilde{g}_l(\psi_l(s-\tilde{v}_{ijl}(s)))\nonumber\\
&\leq&\sup_{t\in\mathbb{T}}\max_{1\leq i\leq
m}\bigg\{\int^t_{-\infty}e_{-\hat{c}^-_i}(t,\sigma(s))\bigg(
\sum^m_{j=1}\hat{a}^+_{ij}\epsilon_j|\varphi_j(s-\tilde{\gamma}_{ij}(s))-\psi_j(s-\tilde{\gamma}_{ij}(s))|\nonumber\\
&&+\sum^m_{j=1}\sum^m_{l=1}\hat{b}^+_{ijl}\varepsilon_j|\varphi_j(s-\tilde{\omega}_{ijl}(s))-
\psi_j(s-\tilde{\omega}_{ijl}(s))||\tilde{g}_l(\varphi_l(s-\tilde{v}_{ijl}(s)))|\nonumber\\
&&+\sum^m_{j=1}\sum^m_{l=1}\hat{b}^+_{ijl}\varepsilon_l|\varphi_l(s-\tilde{v}_{ijl}(s))
-\psi_l(s-\tilde{v}_{ijl}(s))||\tilde{g}_j(\psi_j(s-\tilde{\omega}_{ijl}(s)))|\bigg)\Delta s\bigg\}\nonumber\\
&\leq&\sup_{t\in\mathbb{T}}\max_{1\leq i\leq
m}\bigg\{\int^t_{-\infty}e_{-\hat{c}^-_i}(t,\sigma(s))\bigg(\sum^m_{j=1}
\hat{a}^+_{ij}\epsilon_j
+\sum^m_{j=1}\sum^m_{l=1}\hat{b}^+_{ijl}(N_j\varepsilon_{l}+N_l\varepsilon_j)\bigg)\|\varphi-\psi\|_{0}\Delta
s\bigg\}\nonumber\\
&\leq&\frac{\max\limits_{1\leq i\leq
m}\{\bar{\eta}_i\}}{\min\limits_{1\leq i\leq m}\{
\hat{c}^-_i\}}\|\varphi-\psi\|_{0},
\end{eqnarray*}
which
implies that $\Phi$ is a contraction
mapping. Hence, $\Phi$ has a unique fixed point in $E$, that is
\eqref{qqe1121} has a unique almost periodic solution in the region
$E$.
This completes the proof.
\end{proof}

\section{Conclusion}
\indent

In this paper, we proposed two types of concepts of almost periodic functions on the quantum time scale and investigated some their basic properties. Moreover, we established a result about the existence and uniqueness of almost
periodic solutions of  dynamic equations on the quantum time scale by Lyapunov method and  gave an equivalent concept of  almost periodic functions on the quantum time scale. In addition, as an application, we proposed a class of high-order Hopfield neural network on  the  quantum time scale and established the existence of almost periodic solutions of the neural networks.
Based on the concepts introduced and the results obtained in this paper, we can further study almost periodic problems for dynamic equations on the quantum time scale. For example, we can study the almost periodicity of  population dynamical models on the quantum time scale. Furthermore,
by using the idea of transformations of \eqref{g1} and \eqref{g2}, we can give a new concept of  almost periodic functions on  time scales as follows:
\begin{definition}\label{final}
Let $\mathbb{T}$ be a time scale, $\tilde{\mathbb{T}}$ be an almost periodic time scale defined by Definition \ref{li67} and there exist a one to one mapping $\chi: \mathbb{T}\rightarrow \mathbb{R}$ such that $\chi(\mathbb{T})=\tilde{\mathbb{T}}$. A function $f\in
C(\mathbb{T}\times D, E^n)$ is called an almost periodic function in
$t\in\mathbb{T}$ uniformly in $x\in D$ if the
$\varepsilon$-translation set of $f$
$$E\{\varepsilon,f,S\}=\{\tau\in\Pi:|f(\chi^{-1}(t+\tau),x)-f(\chi^{-1}(t),x)|<\varepsilon,\,\,
\forall\,\, (t,x)\in  \tilde{\mathbb{T}}\times S\}$$ is  relatively
dense  for every $\varepsilon>0$ and   for each
compact subset $S$ of $D$; that is, if for every $\varepsilon>0$
and for each compact subset $S$ of $D$, there exists
a constant $\ell(\varepsilon,S)>0$ such that each interval of length
$\ell(\varepsilon,S)$ contains a $\tau(\varepsilon,S)\in
E\{\varepsilon,f,S\}$ such that
\[
|f(\chi^{-1}(t+\tau),x)-f(\chi^{-1}(t),x)|<\varepsilon,\,\,\, \forall (t,x)\in
\tilde{\mathbb{T}}\times S,
\]
where $\Pi=\{\tau\in\mathbb{R}:t+\tau\in\tilde{\mathbb{T}},\forall
t\in\tilde{\mathbb{T}}\}, \chi^{-1}: \tilde{\mathbb{T}}\rightarrow \mathbb{T}$ is the inverse mapping of $\chi$. $\varepsilon$ is called the $\varepsilon$-translation number of $f$.
\end{definition}
Obliviously, Definition \ref{final} greatly extends the concept of almost periodic functions on time scales that was introduced in \cite{bing5} and it unifies the concepts of almost periodic functions on time scales for the cases of $\mathbb{T}=\mathbb{R}, \mathbb{T}=\mathbb{Z}, \mathbb{T}=\overline{q^\mathbb{Z}}$ and the case $\mathbb{T}$ is an almost periodic time scale defined by Definition \ref{li67}.
Similarly, we can give new definitions of almost automorphic functions on time scales. These are our future goals.

\end{document}